\numberwithin{equation}{section}
\newcommand{\R}{{\mathbb{R}}}
\newcommand{\Zb}{{\mathbb{Z}}}
\newcommand{\C}{{\mathcal{C}}}
\newcommand{\N}{{\mathbb{N}}}
\newcommand{\beq}{\begin{equation}}
\newcommand{\eeq}{\end{equation}}
\newcommand{\Scal}{{\mathcal{S}}}
\newcommand{\Hcal}{{\mathcal{H}}}
\newcommand{\dd}{{\,\rm d}}
\newcommand{\Lcal}{{\mathcal{L}}}
\newcommand{\eps}{{\varepsilon}}
\newcommand{\one}{\mathbbm{1}}
\newcommand{\skw}{\mathrm{skew}}
\newcommand{\sym}{\mathrm{sym}}
\newcommand{\supp}{\mathrm{supp}\,}
\newcommand{\qand}{\quad\text{and}\quad}
\newcommand{\Tr}{\mathrm{Tr}\,}
\newcommand{\Id}{\mathrm{Id}\,}
\newcommand{\mres}{\mathbin{\vrule height 1.6ex depth 0pt width
0.13ex\vrule height 0.13ex depth 0pt width 1.3ex}}
\def\dist{\text{dist}}
\def\Xint#1{\mathchoice
{\XXint\displaystyle\textstyle{#1}}%
{\XXint\textstyle\scriptstyle{#1}}%
{\XXint\scriptstyle\scriptscriptstyle{#1}}%
{\XXint\scriptscriptstyle\scriptscriptstyle{#1}}%
\!\int}
\def\XXint#1#2#3{{\setbox 0=\hbox{$#1{#2#3}{\int}$}
\vcenter{\hbox{$#2#3$}}\kern-.5\wd0}}
\def\dashint{\Xint-}
\def\rightharpoonupfill@{\arrowfill@\relbar\relbar\rightharpoonup}
\renewcommand{\xrightharpoonup}[2][]{\ext@arrow
0359\rightharpoonupfill@{#1}{#2}} \makeatother
\newtheoremstyle{thmlemcorr}{10pt}{10pt}{\itshape}{}{\bfseries}{.}{10pt}{{\thmname{#1}\thmnumber{ #2}\thmnote{ (#3)}}}
\newtheoremstyle{remexample}{10pt}{10pt}{}{}{\bfseries}{.}{10pt}{{\thmname{#1}\thmnumber{ #2}\thmnote{ (#3)}}}
\newtheoremstyle{defi}{10pt}{10pt}{\itshape}{}{\bfseries}{.}{10pt}{{\thmname{#1}\thmnumber{ #2}\thmnote{ (#3)}}}
\theoremstyle{thmlemcorr}
\newtheorem{theorem}{Theorem}[section]
\newtheorem{lemma}[theorem]{Lemma}
\newtheorem{proposition}[theorem]{Proposition}
\theoremstyle{defi}
\newtheorem{definition}[theorem]{Definition}
\theoremstyle{remexample}
\newtheorem{example}[theorem]{Example}
\newenvironment{remark}
  {\pushQED{\qed}\remarkx}
  {\popQED\endremarkx}
\newcommand{\BV}{{BV}}
\newcommand{\BD}{{BD}}
\newcommand{\SBV}{{\rm SBV}_0(Q_\eta;\R^n)}
\newcommand{\Av}{{\rm VA}_0(Q_\eta;\R^n)}
\newcommand{\PC}{{\rm VG}_0(Q_\eta;\R^n)}
\newcommand{\Ca}{{\rm PC}_0(Q_\eta;\R^n)}
\newcommand{\PR}{{\rm PR}_0(Q_\eta;\R^n)}
\newcommand{\J}{{\rm SJ}_0(Q_\eta;\R^n)}
\newcommand{\SBVs}{{\rm SBV}_0}
\newcommand{\Avs}{{\rm VA}_0}
\newcommand{\PCs}{{\rm VG}_0}
\newcommand{\Cas}{{\rm PC}_0}
\newcommand{\Js}{{\rm SJ}_0}
\title[Characterizing ${\rm BV}$- and ${\rm BD}$-ellipticity]{Characterizing $\rm\bm{BV}$- and $\rm\bm{BD}$-ellipticity for a class of positively $\bm{1}$-homogeneous surface energy densities}
\author{Dominik Engl}
\address{Mathematisch-Geographische Fakult\"at, Katholische Universit\"at Eichst\"att-Ingolstadt, Ostenstra{\ss}e 28, 85071 Eichst\"att, Germany}
\email{dominik.engl@ku.de}
\author{Carolin Kreisbeck}
\address{Mathematisch-Geographische Fakult\"at, Katholische Universit\"at Eichst\"att-Ingolstadt, Ostenstra{\ss}e 28, 85071 Eichst\"att, Germany}
\email{carolin.kreisbeck@ku.de}
\author{Marco Morandotti}
\address{Dipartimento di Scienze Matematiche ``G. L. Lagrange'', Politecnico di Torino, Corso Duca degli Abruzzi 24, 10129 Torino, Italy}
\email{marco.morandotti@polito.it}
\begin{document}

\maketitle
\thispagestyle{empty}

 \begin{abstract}
 
\begin{itemize}
	Lower semicontinuity of surface energies in integral form is known to be equivalent to $\rm BV$-ellipticity of the surface density. 
	In this paper, we prove that $\rm BV$-ellipticity coincides with the simpler notion of biconvexity for a class of densities that depend only on the jump height and jump normal, and are positively $1$-homogeneous in the first argument.
	The second main result is the analogous statement in the setting of bounded deformations, where we show that $\rm BD$-ellipticity reduces to symmetric biconvexity. 
	Our techniques are primarily inspired by constructions from the analysis of structured deformations and the general theory of free discontinuity problems.
\end{itemize}
 \vspace{8pt}
\vspace{8pt}

 \noindent\textsc{MSC (2020):}  49J45, secondary: 26B25, 9Q20, 70G75
 
 \noindent\textsc{Keywords:} interfacial energy, lower semicontinuity, $\rm BV$- and $\rm BD$-ellipticity, biconvexity
 
 \vspace{8pt}
 
 \noindent\textsc{Date:} \today.
 \end{abstract}

\section{Introduction}
In many applied problems, especially, but not restricted to, those in continuum mechanics, equilibrium configurations are obtained by minimizing interfacial energies. 
One typically studies functionals of the form
\begin{align}\label{surface_energy}
	u\mapsto \int_{J_u}g\big(u^-(x),u^+(x),\nu_u(x)\big)\dd \Hcal^{n-1}(x),
\end{align}
where $u$ is an $\rm SBV$-function with jump set $J_u$, jump normal $\nu_u$, and approximate limits $u^-$ and $u^+$ on both sides of $J_u$, and $g\colon \R^{n}\times\R^n\times\Scal^{n-1}\to [0,\infty)$ is a suitable energy density.
Such energies often appear in the context of fracture mechanics \cite{AmB95}, polycrystalline solids \cite{Car08, Car13, Car17}, liquid crystals \cite{AmB90i, AmB90ii}, free discontinuity problems \cite{AFP00}, or the relatively recent theory of structured deformations, see \cite{ChF97, DPO1993} or \cite{MMO2023} and the references therein.

While energies as in \eqref{surface_energy}, defined on the set of piecewise constant functions (in the sense of Caccioppoli), were first addressed in \cite{Alm76}, a general variational theory to handle existence of minimizers, relaxation, and $\Gamma$-convergence has been developed later in \cite{AmB90i,AmB90ii}.
For bounded densities, it was proven in \cite{AmB90ii, AFP00} that lower semicontinuity of the surface energy \eqref{surface_energy} is equivalent to \textit{$\rm BV$-ellipticity} of the corresponding density $g$. This notion is the surface-density-analogue of quasiconvexity, the key convexity notion in the bulk-case.
One calls $g$ $\rm BV$-elliptic if
\begin{align}\label{intro_BV}
	g(i,j,\eta) \leq \int_{J_u}g(u^-,u^+,\nu_u)\dd \Hcal^{n-1}
\end{align}
for every $(i,j,\eta)\in\R^n\times\R^n\times \Scal^{n-1}$ and every piecewise constant function $u$ on $Q_\eta$ with $\{u\neq u_{i,j,\eta}\}\Subset Q_\eta$;
here, the set $Q_\eta\subset \R^n$ describes an open unit cube with a face that is orthogonal to $\eta$ and $u_{i,j,\eta}$ is the elementary jump from $j$ to $i$ along the line $\{x\cdot \eta = 0\}$.

Motivated by the setting of structured deformations, in which the energies account for microscopic slips and separations and, generally, the direction in which they take place, we assume that $g$ has the shape 
\begin{align}\label{choice}
	g(i,j,\eta)\coloneqq f(i-j,\eta) \quad\text{ with } f(\alpha \lambda,\eta) = \alpha f(\lambda,\eta)\text{ for every } \alpha> 0,\,(\lambda,\eta)\in\R^n\times\Scal^{n-1},
\end{align}
subadditive and with linear growth in the first variable;
via \eqref{1hom_extension} below, the function $f$ can be viewed as positively $1$-homogeneous also in the second variable.
Since the pair $([u],\nu_u)$ with $[u]=u^+-u^-$ is only unique up to a sign, it is natural to require that $f$ is even, i.e., 
$f(-\lambda,-\eta) = f(\lambda,\eta)$ for every $(\lambda,\eta)\in\R^n\times\Scal^{n-1}$.

Our assumption \eqref{choice} on the surface density is, however, incompatible with boundedness, which is why only partial characterization results for lower semicontinuity are available.
It is straightforward to show that the proof of \cite[Theorem 5.14]{AFP00} can be modified without relying on boundedness. 
Hence, $\rm BV$-ellipticity is still necessary for the lower semicontinuity of the corresponding energy.
A partial sufficiency result, on the other hand, follows as in \cite[Corollary 2.5]{FPS21};
indeed, the $\rm BV$-ellipticity of the density yields lower semicontinuity of the energy along converging sequences of piecewise constant functions that are bounded in $L^{\infty}(\Omega;\R^n)$.

Since $\rm BV$-ellipticity is usually difficult to verify, one is interested in stronger notions that are easier to handle. Such concepts have been analyzed and compared extensively in the literature, for example, in \cite{AFP00} or \cite{AmB90ii, Car08, Car13, Car17}.
One such notion is \textit{biconvexity}, which requires that the surface density in \eqref{surface_energy} can be written as
$$g(i,j,\eta) = \Phi\big((j-i)\otimes\eta\big)\quad \text{ for every } (i,j,\eta)\in\R^n\times\R^n\times\Scal^{n-1},$$
with a convex, positively $1$-homogeneous function $\Phi\colon\R^{n\times n}\to [0,\infty)$. This property was introduced by Ambrosio \& Braides \cite{AmB90ii} in a finite-valued setting.
It turned out that biconvexity does indeed imply $\rm BV$-ellipticity \cite[Proposition 2.2]{AmB90ii}, but the reverse has only been conjectured.
Since every biconvex function is necessarily positively $1$-homogeneous in the first variable, this equivalence requires a type of $1$-homogeneity condition;
indeed, one can  easily construct a non-positively $1$-homogeneous $\rm BV$-elliptic function by exploiting joint convexity, see \cite[Definition 5.17, Theorem 5.20]{AFP00}.
The conjecture can thus only be true for densities of the form \eqref{choice}.
As proposed in \cite{AmB90ii}, the inequality 
\begin{align}\label{AmB90_inequality}
	g(i,j,\eta) \leq \sum_{k=1}^mg(i_k,j_k,\eta_k)\quad\text{ with } \sum_{k=1}^m(i_k-j_k)\otimes \eta_k = (i-j)\otimes\eta,
\end{align}
for all $(i,j,\eta),(i_k,j_k,\eta_k)\in\R^n\times\R^n\times\Scal^{n-1}$ and $m\in\N$, would verify that $\rm BV$-ellipticity reduces to biconvexity.
The estimate \eqref{AmB90_inequality} has been shown later in \cite{Sil17} by \v{S}ilhav\'y in the context of structured deformations, however, without establishing a connection to \cite{AmB90i,AmB90ii} or \cite{AFP00}.
In this paper, we merge the complementary results of the two communities and discuss different convexity and $\rm BV$-ellipticity notions. Our first contribution is the following equivalence:
\begin{theorem}[Characterization of $\rm \bm{BV}$-ellipticity]\label{theo:intro_BV}
	If $f\colon\R^n\times\Scal^{n-1}\to [0,\infty)$ is even and positively $1$-homogeneous in the first variable, then $f$ is $\rm BV$-elliptic if and only if $f$ is biconvex.
\end{theorem}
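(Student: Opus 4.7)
The plan is to prove both implications. The \emph{biconvex implies $\rm BV$-elliptic} direction is essentially classical \cite[Proposition 2.2]{AmB90ii} and follows from the divergence-theorem identity $\int_{J_u}[u]\otimes\nu_u\,\dd\Hcal^{n-1} = (i-j)\otimes\eta$ (for any piecewise constant competitor $u$ with $\{u\neq u_{i,j,\eta}\}\Subset Q_\eta$) combined with Jensen's inequality applied to the convex, positively $1$-homogeneous function $\Phi$; the evenness of $f$ ensures consistency of signs between the two sides of \eqref{intro_BV}.

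The converse direction is the nontrivial one, and I would split it into two steps. \emph{Step 1:} I first show that $\rm BV$-ellipticity implies the sum inequality \eqref{AmB90_inequality}. Given a decomposition $\sum_{k=1}^m \lambda_k\otimes\eta_k = (i-j)\otimes\eta$, I would realize it as the jump measure of a carefully constructed piecewise constant competitor on $Q_\eta$ with boundary value $u_{i,j,\eta}$, consisting of $m$ disjoint thin slabs with unit normals $\eta_k$ and jumps proportional to $\lambda_k$. The positive $1$-homogeneity of $f$ in its first argument tunes the slab widths so that the $k$-th slab contributes exactly $\lambda_k\otimes\eta_k$ to the jump measure and $f(\lambda_k,\eta_k)$ to the energy; the evenness of $f$ resolves the sign ambiguity in the choice of $\eta_k$. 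The rank-one compatibility identity is precisely what is needed for the slabs to combine into a globally defined function with the prescribed boundary trace, so that feeding this competitor into \eqref{intro_BV} yields
\[
f(i-j,\eta) \leq \sum_{k=1}^m f(\lambda_k,\eta_k).
\]

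\emph{Step 2:} I then turn the sum inequality into biconvexity by defining
\[
\Phi(M) := \inf\left\{\sum_{k=1}^m f(\lambda_k,\eta_k) : m\in\N,\ \lambda_k\in\R^n,\ \eta_k\in\Scal^{n-1},\ \sum_{k=1}^m\lambda_k\otimes\eta_k = M\right\}
\]
for $M\in\R^{n\times n}$. The infimum is finite since every matrix admits a rank-one decomposition. Positive $1$-homogeneity of $\Phi$ is inherited from that of $f$ in the first argument, subadditivity follows by concatenating decompositions of $M_1$ and $M_2$, and together they yield convexity of $\Phi$. The sum inequality from Step~1 shows that for rank-one matrices $M = \lambda\otimes\eta$ the trivial one-term decomposition is optimal, so $\Phi(\lambda\otimes\eta) = f(\lambda,\eta)$, and hence $f$ is biconvex.

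The main obstacle is the laminated construction in Step~1: since the normals $\eta_k$ need not be aligned with the macroscopic normal $\eta$, one cannot simply stack slabs in a single direction, and a multi-scale lamination is required, iteratively invoking the rank-one compatibility identity to cancel the boundary residuals that slabs with $\eta_k\neq\eta$ would otherwise leave behind, all while keeping the jump set compactly contained in $Q_\eta$. Once this construction is in place, Step~2 proceeds by routine convex-analytic considerations.
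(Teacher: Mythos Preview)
Your overall strategy coincides with the paper's: the biconvex-to-elliptic direction via Jensen plus the divergence-theorem identity, and the converse via the sum inequality \eqref{AmB90_inequality} followed by the infimum formula for $\Phi$ (which is exactly the paper's $\Phi_f$, cf.~Lemma~\ref{lem:Phi_f} and Theorem~\ref{theo:BV_elliptic}). Step~2 is correct as written.

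The gap is Step~1. The picture you sketch---$m$ disjoint thin slabs with normals $\eta_k$, then a ``multi-scale lamination'' to repair the boundary---does not obviously produce a piecewise constant function with the prescribed trace: slabs with non-aligned normals intersect and create extraneous jump interfaces whose energy you have no way to control, and an iterated lamination would change the jump heights at each stage. The construction the paper actually uses (imported from \v{S}ilhav\'y \cite[Lemma~6.2]{Sil17}; see the explicit ${\rm BD}$-analogue in Lemma~\ref{lem:silhavy_construction}) is different and single-scale: inside a thin strip $B_k=\{0\leq x\cdot\eta\leq 1/k\}$ one sets
\[
v_k(x)=\sum_{i=1}^m \tfrac{1}{k}\lambda_i\,\langle k^2\, x\cdot\eta_i\rangle,
\]
with $\langle\cdot\rangle$ the integer part. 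This is piecewise constant, and the decomposition identity $\sum_i\lambda_i\otimes\eta_i=\lambda\otimes\eta$ gives $|v_k(x)-k(x\cdot\eta)\lambda|\leq \tfrac{1}{k}\sum_i|\lambda_i|$, so $v_k$ matches $u_{\lambda,\eta}$ on $\partial B_k$ up to $O(1/k)$. A direct count of the level sets $\{k^2 x\cdot\eta_i\in\Zb\}\cap B_k$ shows the interior jump energy tends to $\sum_i f(\lambda_i,\eta_i)$, while the $\partial B_k$ contribution vanishes thanks to the linear growth \eqref{linear_bound} (which follows from BV-ellipticity via Proposition~\ref{prop:subadditivity} and Remark~\ref{rem:sep_cvx}). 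This is a superposition of fine staircases in one thin layer, not an iterated lamination, and it is precisely what makes the rank-one compatibility condition do the work you want.
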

Note that the definition of the two properties of $f$ as above are canonically transferred from \eqref{choice}, see Definitions \ref{def:surface_qc} and \ref{def:biconvexity}.

Among the recent advances in the analysis of energies like \eqref{surface_energy} defined on piecewise rigid functions are \cite{FrS20, FPS21}.
In particular, Friedrich, Perugini \& Solombrino (cf.~\cite{FPS21}) carry the notions of $\rm BV$-ellipticity and biconvexity (as well as joint convexity) over to the ${\rm BD}$-setting of functions with bounded deformation.
They show for bounded densities that the energy functional \eqref{surface_energy}, defined on the set of piecewise rigid functions with skew-symmetric gradients
is lower semicontinuous if and only if $g$ is \textit{${\rm BD}$-elliptic}. 
The latter is similar to $\rm BV$-ellipticity in the sense that \eqref{intro_BV} holds for every $(i,j,\eta)\in\R^n\times\R^n\times \Scal^{n-1}$ and every piecewise rigid function $u$ with $\{u\neq u_{i,j,\eta}\}\Subset Q_\eta$. It is evident that ${\rm BD}$-elliptic functions are also $\rm BV$-elliptic.

In \cite{FPS21}, the authors also define the concept of \textit{symmetric biconvexity}, for which $g$ satisfies
\begin{align}\label{intro_symmetric_biconvex}
	g(i,j,\eta) = \Psi\big((j-i)\odot\eta\big)\quad \text{ for every } (i,j,\eta)\in\R^n\times\R^n\times\Scal^{n-1},
\end{align}
with a convex, positively $1$-homogeneous $\Psi\colon\R^{n\times n}_\skw\to[0,\infty)$; here $(i-j)\odot\eta$ is short for the symmetric part of $(i-j)\otimes\eta$.
Whereas \cite[Proposition 4.10]{FPS21} already establishes that symmetric biconvex functions with $\{\Psi=0\}=\{0\}$, where $\Psi$ is as in \eqref{intro_symmetric_biconvex}, are ${\rm BD}$-elliptic, the question whether the two notions are equivalent (under suitable conditions) remained open. 
Our second main result is the affirmation of this issue for the choice \eqref{choice}.
\begin{theorem}[Characterization of $\rm \bm{BD}$-ellipticity]\label{theo:intro_BD}
	If $f\colon\R^n\times\Scal^{n-1}\to [0,\infty)$ is even and positively $1$-homogeneous in the first variable, then $f$ is ${\rm BD}$-elliptic if and only if $f$ is symmetric biconvex.
\end{theorem}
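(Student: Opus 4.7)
The plan is to establish the two implications separately, mirroring the strategy for Theorem \ref{theo:intro_BV} but with the symmetric tensor product $\odot$ in place of $\otimes$. The direction symmetric biconvex $\Rightarrow$ ${\rm BD}$-elliptic is the easier one: if $f(\lambda,\eta) = \Psi(\lambda \odot \eta)$ with $\Psi$ convex and positively $1$-homogeneous, then $\Psi$ is sublinear. For any admissible piecewise rigid $u$ on $Q_\eta$ with $\{u \neq u_{i,j,\eta}\} \Subset Q_\eta$, the distributional symmetric gradient reduces to $Eu = [u] \odot \nu_u\, \Hcal^{n-1} \mres J_u$ (the rigid pieces have purely skew classical gradients and there is no Cantor part), and a cutoff argument gives $\int_{J_u}[u]\odot \nu_u\, \dd \Hcal^{n-1} = Eu(Q_\eta) = Eu_{i,j,\eta}(Q_\eta) = (i-j)\odot\eta$. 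Sublinearity of $\Psi$ then yields
\begin{equation*}
f(i-j,\eta) = \Psi\Bigl(\int_{J_u} [u] \odot \nu_u\, \dd \Hcal^{n-1}\Bigr) \le \int_{J_u} \Psi([u]\odot \nu_u)\, \dd \Hcal^{n-1} = \int_{J_u} f([u],\nu_u)\, \dd \Hcal^{n-1},
\end{equation*}
which is ${\rm BD}$-ellipticity.

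The converse rests on a \emph{symmetric} \v{S}ilhav\'y-type inequality: for every $(\lambda,\eta)$ and every finite family $(\lambda_k,\eta_k)_{k=1}^m$ in $\R^n \times \Scal^{n-1}$ with $\sum_k \lambda_k \odot \eta_k = \lambda \odot \eta$, one should have $f(\lambda,\eta) \le \sum_k f(\lambda_k,\eta_k)$. To deduce this from ${\rm BD}$-ellipticity, the plan is to construct, given such a decomposition, a piecewise rigid competitor $u$ on $Q_\eta$ coinciding with the elementary jump outside a compact subset, whose jump set splits into pieces with normals $\eta_k$ and jumps approximately $\lambda_k$ contributing $\sum_k f(\lambda_k,\eta_k)$ to the surface energy (after exploiting positive $1$-homogeneity of $f$ to absorb area ratios). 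The key point is that the residual $W := \sum_k \lambda_k \otimes \eta_k - \lambda \otimes \eta$, which is skew-symmetric by hypothesis, can be absorbed by assigning nonzero skew gradients $A_k$ to the partition pieces, a degree of freedom unavailable for piecewise constant competitors and therefore missing from the proof of Theorem \ref{theo:intro_BV}.

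Once the symmetric \v{S}ilhav\'y inequality is available, define on $\R^{n \times n}_{\sym}$
\begin{equation*}
\Psi(A) := \inf\Bigl\{\sum_{k=1}^m f(\lambda_k,\eta_k) : m \in \N,\ (\lambda_k,\eta_k) \in \R^n \times \Scal^{n-1},\ \sum_{k=1}^m \lambda_k \odot \eta_k = A\Bigr\}.
\end{equation*}
Since every symmetric matrix admits a finite decomposition into symmetric rank-one terms $\pm v \odot v$ and $f$ has linear growth, $\Psi$ is finite. Convexity and positive $1$-homogeneity are immediate from the infimum structure (concatenate decompositions of $A$ and $A'$; rescale decompositions for $cA$, $c>0$). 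The symmetric \v{S}ilhav\'y inequality, combined with the trivial one-term upper bound $\Psi(\lambda \odot \eta) \le f(\lambda,\eta)$, yields $\Psi(\lambda \odot \eta) = f(\lambda, \eta)$ for every $(\lambda,\eta)$; together with evenness of $f$, this identifies $f$ as symmetric biconvex in the sense of \eqref{intro_symmetric_biconvex}.

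The main obstacle I anticipate is the construction of piecewise rigid test functions realizing a prescribed symmetric decomposition while satisfying the compact-support boundary condition. Unlike the ${\rm BV}$ case, where nested piecewise constant laminates in the spirit of \cite{Sil17} are sufficient, here the competitor must locally carry skew rigid motions whose cumulative effect cancels the residual $W$, and such motions may produce jumps $[u]$ that vary linearly along each interface rather than remaining constant. I expect to handle this either by passing to finer laminations so that the oscillation of $[u]$ on each interface becomes negligible relative to its area-averaged value $\lambda_k$, or by a direct combinatorial construction in which the rigid degrees of freedom on the partition cells are explicitly chosen to annihilate $W$ while matching the elementary jump near $\partial Q_\eta$.
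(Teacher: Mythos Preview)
Your plan is sound and the two directions are both correct. For the implication symmetric biconvex $\Rightarrow$ ${\rm BD}$-elliptic, your Jensen argument via $Eu(Q_\eta)=Eu_{\lambda,\eta}(Q_\eta)$ is essentially equivalent to the paper's Step~2 of Theorem~\ref{theo:BD_symmetric_biconvex}; the paper phrases it dually via $\Psi(F)=\sup_i A_i:F$ with symmetric $A_i$ and a chain-rule computation, but the content is the same.

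For the harder direction, however, your route genuinely differs from the paper's. The paper works with $\Phi_f$ (defined through $\otimes$-decompositions, cf.~\eqref{Phi1}) restricted to $\R^{n\times n}_{\sym}$. To get the upper bound $\Phi_f(\lambda\odot\eta)\le f(\lambda,\eta)$ it first proves, via a fairly delicate triangle-and-prism construction (Proposition~\ref{prop:switch}), the symmetry $\bar f(\lambda,\eta)=\bar f(\eta,\lambda)$, and for the lower bound it proves a \v{S}ilhav\'y inequality (Lemma~\ref{lem:silhavy_construction}) under the specific hypothesis $\sum_i\lambda_i\otimes\eta_i=\lambda\odot\eta$, using a competitor of the form $\sum_i\frac{1}{k}\lambda_i\langle k^2 x\cdot\eta_i\rangle-k(\eta\otimes\lambda)^{\skw}x$ on a thin slab. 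Your choice of $\Psi$ via $\odot$-decompositions makes the upper bound $\Psi(\lambda\odot\eta)\le f(\lambda,\eta)$ trivial (one-term decomposition), so you sidestep Proposition~\ref{prop:switch} entirely; the price is that your \v{S}ilhav\'y inequality must hold under the weaker hypothesis $\sum_k\lambda_k\odot\eta_k=\lambda\odot\eta$, i.e., for an \emph{arbitrary} skew residual $W=\sum_k\lambda_k\otimes\eta_k-\lambda\otimes\eta$. This generalization does go through with the same slab construction as in Lemma~\ref{lem:silhavy_construction}: simply replace the specific skew correction $(\eta\otimes\lambda)^{\skw}$ by $-W$, i.e., take $v_k(x)=\sum_i\frac{1}{k}\lambda_i\langle k^2x\cdot\eta_i\rangle-kWx$; the key estimate $|k\lambda(x\cdot\eta)-v_k(x)|\le\frac{1}{k}\sum_i|\lambda_i|$ then follows from $(\lambda\otimes\eta)+W=\sum_i\lambda_i\otimes\eta_i$ exactly as before. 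Note that a \emph{single} skew gradient on the slab absorbs $W$; you do not need different $A_k$'s on each partition cell as your proposal tentatively suggests. In summary, your organization is arguably more streamlined than the paper's, trading the symmetry lemma for a mild (and free) generalization of the \v{S}ilhav\'y construction; the paper's route has the advantage of identifying $\Psi$ explicitly as $\Phi_f$ and of isolating the interesting symmetry property $\bar f(\lambda,\eta)=\bar f(\eta,\lambda)$ as a standalone consequence of ${\rm BD}$-ellipticity.
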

Proving this equivalence involves several steps. 
We establish that symmetric biconvex functions are ${\rm BD}$-elliptic by providing an alternative proof of \cite[Proposition 4.10]{FPS21} that does not require the assumption $\{\Psi = 0\} = \{0\}$ with $\Psi$ as in \eqref{intro_symmetric_biconvex} by reorganizing results and arguments from \cite{FPS21}.
To obtain the reverse implication, we leverage the larger class of test functions, unveiling additional properties besides those inherited by their $\rm BV$-ellipticity. 
Precisely, we show that ${\rm BD}$-elliptic densities (or rather their positively $1$-homogeneous extensions, see \eqref{1hom_extension} below) are symmetric in the sense that their two arguments are interchangeable. This can be done by combining techniques from \cite{Sil17}, which are based on the positive $1$-homogeneity, classic arguments in \cite{AFP00}, and the class of piecewise rigid functions.
The final step is to a prove a symmetric analogue of inequality \eqref{AmB90_inequality} in the ${\rm BD}$-setting, for which we carefully adapt a construction in \cite{Sil17} from the $\rm BV$-setting.

\subsection*{Organization of this paper}
In Section \ref{sec:preliminaries}, we cover the notation used in this article as well as a few technical preliminaries. 
After that, we introduce and characterize a number of $\rm BV$-ellipticity notions, defined via different classes of test functions in \eqref{intro_BV}.
While some of these properties coincide with biconvexity, see Theorem \ref{theo:BV_elliptic}, we also highlight that others become trivial if the class of test functions is too large or small, see Propositions \ref{prop:trivial} and \ref{prop:J0_BV}. We briefly discuss an alternative approach to joint convexity and characterize the $\rm BV$-elliptic envelopes of functions of the form \eqref{choice}.

Section \ref{sec:BD} is then devoted to the ${\rm BD}$-setting, where we prove the equivalence of ${\rm BD}$-ellipticity and symmetric biconvexity in Theorem \ref{theo:BD_symmetric_biconvex}.
Similarly to before, we review the notion of symmetric joint convexity in our context of \eqref{choice} and provide characterizations of ${\rm BD}$-elliptic envelopes.
We round off the article with a curios example of a biconvex function that is symmetric biconvex although it does not appear to be so at first glance.

\section{Preliminaries}\label{sec:preliminaries}
\subsection{Notation}
Let $n\in\N$. We denote the standard basis vectors of $\R^n$ with $e_1,\ldots,e_n$. For the Euclidean scalar product of two vectors $a,b\in\R^n$, we write $a\cdot b$ and the length of $a$ is then given by $|a|=\sqrt{a\cdot a}$.
Their tensor product (or outer/ dyadic product) $a\otimes b\in\R^{n\times n}$ is defined componentwise as $(a\otimes b)_{ij} \coloneqq a_ib_j$ for every $i,j\in\{1,\ldots,n\}$;
we denote its symmetric part $\frac{1}{2}a\otimes b + \frac{1}{2}b\otimes a$ as $a\odot b$.
The $(n-1)$-dimensional unit sphere $\Scal^{n-1}$ consists of all vectors in $\R^n$ with unit length.
Let $\eta\in\Scal^{n-1}$ be given and let $\zeta_1,\ldots,\zeta_{n-1}\in\Scal^{n-1}$ be such that the matrix $S=(\eta|\zeta_1|\cdots|\zeta_{n-1})\in\R^{n\times n}$ satisfies $S^TS=SS^T=\Id$ and $\det S = 1$, where $(\cdot)^T$ stands for the transpose and $\Id\in\R^{n\times n}$ is the identity matrix.
With a little abuse of notation, we use the symbol $x\cdot\eta^\perp$ to indicate 
$x\cdot \zeta_i$ for every $i=1,\ldots,n-1$.
In particular, we write 
\begin{align}\label{perpendicular}
	-\alpha\leq x\cdot\eta^\perp\leq\alpha\qquad :\Longleftrightarrow\qquad -\alpha\leq x\cdot \zeta_i\leq\alpha,\qquad\text{for all $i\in\{1,\ldots,n-1\}$.}
\end{align}
for $\alpha\geq 0$.

The scalar product of two square matrices $A,B\in\R^{n\times n}$ shall be given as $A:B=\sum_{i,j=1}^nA_{ij}B_{ij}$; this scalar product then induces the Frobenius norm $|A| \coloneqq \sqrt{A:A}$ of $A$.
For the set of symmetric and skew-symmetric matrices in $\R^{n\times n}$ we write $\R^{n\times n}_{\sym}$ and $\R^{n\times n}_{\skw}$; note that $A:B=0$ if $A\in\R^{n\times n}_{\sym}$ and $B\in\R^{n\times n}_{\skw}$.

The notation $U\Subset V$ for two sets $U, V\subset \R^n$ means that $U$ is compactly contained in $V$.
Given $\eta\in \Scal^{n-1}$, we define $Q_\eta$ as the open unit cube in $\R^{n}$  centered in the origin such that two faces are orthogonal to $\eta$. 

Moreover, we define $u_{\lambda, \eta}= \lambda \mathbbm{1}_{\{x\cdot \eta \geq 0\}}$ on $Q_\eta$ as the elementary jump of $\lambda\in\R^n$ accross the midplane of $Q_\eta$ perpendicular to $\eta$; here,
$\mathbbm{1}_U$ is the indicator function of set $U\subset \R^n$, which is $1$ on $U$ and vanishes on $\R^n\setminus U$.
A function $h\colon\R^n\to\R$ is called positively $1$-homogeneous if $h(\alpha\xi)=\alpha h(\xi)$ for all $\xi\in \R^n$ and all $\alpha>0$.
We say that a function $f\colon \R^n\times \Scal^{n-1}\to \R$ is even if $f(-\lambda, -\eta) = f(\lambda, \eta)$ for all $\lambda\in \R^n$ and $\eta\in \Scal^{n-1}$. For such a function $f$, we introduce its positively $1$-homogeneous extension in the second variable as
\begin{align}\label{1hom_extension}
\bar{f}\colon \R^n\times \R^n\to \R,\ (\lambda,\zeta)\mapsto \begin{cases}
\displaystyle |\eta| f\Big(\lambda, \frac{\eta}{|\eta|}\Big) & \text{for $\eta\in \R^n\setminus\{0\}$,}\\
0 & \text{for $\eta=0$.}
\end{cases}
\end{align}
By $\Hcal^{n-1}$ we mean the $(n-1)$-dimensional Hausdorff measure and $\Lcal^n$ is the Lebesgue measure in $\R^n$.

Let $U\subset \R^n$ be measurable and $1\leq p \leq \infty$; then we employ the standard notation for the Lebesgue spaces $L^p(U;\R^n)$ and the spaces ${\rm BV}(U;\R^n), {\rm SBV}(U;\R^n)$, as well as $C^1(U;\R^n)$. If $u\in {\rm BV}(U;\R^n)$, then we write $J_u$ for the jump set of $u$, $\nu_u\in\Scal^{n-1}$ for its normal, 
and $[u]\coloneqq u^+ - u^-$, where $u^+$ and $u^-$ are the approximate limits on both sides of $J_u$; note that the pair $([u],\nu_u)$ is only unique up to a sign, which is why we always work with even surface densities.

\subsection{Auxiliary results}

We first prove that rank-one matrices have a decomposition into tensor products of two vectors in $\R^n$ and $\Scal^{n-1}$ that is unique up to a sign.
\begin{lemma}\label{lem:symmetric_tensors}
	If $(\lambda,\eta),(\lambda',\eta')\in\R^{n}\times\Scal^{n-1}$ with $\lambda,\lambda' \neq 0$ satisfy
	$\lambda\otimes\eta = \lambda'\otimes\eta'$, then it holds that $(\lambda,\eta)=(\lambda',\eta')$ or $(\lambda,\eta)=(-\lambda',-\eta')$.
\end{lemma}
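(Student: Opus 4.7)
The plan is to exploit the rank-one structure of the identity $\lambda\otimes\eta=\lambda'\otimes\eta'$ by evaluating both sides on the unit vectors $\eta$ and $\eta'$. Recall the basic identity $(a\otimes b)c = (b\cdot c)\,a$ for vectors $a,b,c\in\R^{n}$. Applying the equation to $\eta$ and using $|\eta|=1$ yields
\[
\lambda = (\lambda\otimes\eta)\eta = (\lambda'\otimes\eta')\eta = (\eta'\cdot\eta)\,\lambda',
\]
and symmetrically, evaluating on $\eta'$ and using $|\eta'|=1$ gives
\[
\lambda' = (\eta\cdot\eta')\,\lambda.
\]

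Substituting the second identity into the first produces $\lambda = (\eta\cdot\eta')^{2}\lambda$. Since $\lambda\neq 0$ by assumption, this forces $(\eta\cdot\eta')^{2}=1$, so that $s\coloneqq \eta\cdot\eta'\in\{-1,+1\}$. Because $\eta,\eta'\in\Scal^{n-1}$, the equality case of the Cauchy--Schwarz inequality then implies $\eta'=s\,\eta$, and the relation $\lambda'=s\lambda$ follows from the displayed identity above. Hence either $s=1$, giving $(\lambda,\eta)=(\lambda',\eta')$, or $s=-1$, giving $(\lambda,\eta)=(-\lambda',-\eta')$, as desired.

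I do not foresee a serious obstacle here: the argument is purely linear-algebraic and does not depend on any of the more delicate structure of $\rm BV$- or ${\rm BD}$-ellipticity discussed in the paper. The only point that deserves a brief remark is the non-vanishing assumption on $\lambda$ (and, implicitly on $\lambda'$, which is automatic since $\lambda'=s\lambda$), which is precisely what enables the cancellation leading to $s^{2}=1$; without it, the rank-one matrix would be zero and the pair $(\lambda,\eta)$ would be determined only up to the choice of an arbitrary $\eta\in\Scal^{n-1}$.
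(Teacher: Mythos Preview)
Your proof is correct and, like the paper's, relies solely on the identity $(a\otimes b)c=(b\cdot c)a$ to extract that $\eta'=\pm\eta$ and $\lambda'=\pm\lambda$ with matching signs. The only cosmetic difference is the choice of test vectors: the paper evaluates on $\xi\perp\eta$ to force $\eta'\cdot\xi=0$ and hence $\eta'\parallel\eta$, whereas you evaluate on $\eta$ and $\eta'$ and invoke the equality case of Cauchy--Schwarz; both routes are equally elementary.
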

\begin{proof}
	Choose any $\xi\in\R^n$ such that $\eta\cdot\xi= 0$, then it holds that
	$$(\eta' \cdot\xi)\lambda' = (\lambda'\otimes \eta')\xi =(\lambda\otimes \eta)\xi  = (\eta\cdot\xi)\lambda = 0.$$
	Since $\lambda' \neq 0$, it holds that $\eta' \cdot \xi = 0$ for any $\xi\in\R^n$ with $\xi\cdot\eta=0$ and thus $\eta'$ is a multiple of $\eta$, which results in either
	$\eta' = \eta$ or $\eta' = - \eta$ because both vectors are normalized.
	Then, for any $x\in\R^n$ we find that either
	$$\big(x\cdot(\lambda - \lambda')\big)\eta = 0 \quad\text{or}\quad \big(x\cdot(\lambda + \lambda')\big)\eta = 0,$$
	which means that $\lambda=\lambda'$ or $\lambda=-\lambda'$.
\end{proof}

Next, we briefly cover a few properties of one of the central functions in \cite[Theorem 2.3]{Sil17}, which will also be relevant in this work.

\begin{lemma}\label{lem:Phi_f}
	Let $f\colon \R^n\times \Scal^{n-1}\to [0,\infty)$ be even, positively $1$-homogeneous in the first variable, and continuous. Then, the function $\Phi_f \colon \R^{n\times n}\to [0,\infty)$ given by
	\begin{align}\label{Phi1}
		\begin{split}
			\Phi_f(F) &\coloneqq \inf\Big\{\sum_{i=1}^{m} f(\lambda_i,\eta_i) : m\in\N,\, (\lambda_i,\eta_i)\in \R^n\times\Scal^{n-1} \text{ for all } i\in\{1,\ldots,m\} \\
					&\hspace{9cm}\text{ with } \sum_{i=1}^m\lambda_i\otimes \eta_i =F\Big\}
		\end{split}
	\end{align}
	for $F\in\R^{n\times n}$, is positively $1$-homogeneous and convex and the integer $m$ in \eqref{Phi1} can be chosen as $m=n^2+1$.
\end{lemma}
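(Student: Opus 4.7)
The plan is to identify the admissible decompositions with elements of a convex cone in $\R^{n^2+1}$ and then read off the three assertions from general properties of such cones. To this end, I would introduce
\[
K \coloneqq \{(\lambda\otimes\eta,\, f(\lambda,\eta)) : \lambda\in\R^n,\ \eta\in\Scal^{n-1}\} \subset \R^{n\times n}\times \R,
\]
and observe that the positive $1$-homogeneity of $f$ in the first variable implies $\alpha K\subset K$ for every $\alpha\geq 0$ (with $f(0,\eta)=0$ obtained from continuity), so that $K$ is itself a cone with apex at the origin. Consequently, the set of pairs $(F,c)$ that admit a representation $F=\sum_{i=1}^m\lambda_i\otimes\eta_i$ with $c=\sum_{i=1}^m f(\lambda_i,\eta_i)$ coincides with the convex conical hull $\mathrm{cone}(K)$, and
\[
\Phi_f(F) = \inf\{c\in[0,\infty) : (F,c)\in\mathrm{cone}(K)\}.
\]

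Positive $1$-homogeneity and subadditivity of $\Phi_f$ are then immediate from this description: scaling a decomposition of $F$ via $\lambda_i\mapsto\alpha\lambda_i$ with $\alpha>0$ produces an admissible decomposition of $\alpha F$ with cost $\alpha c$, and concatenating decompositions of $F_1$ and $F_2$ yields one of $F_1+F_2$ with additive cost. Subadditivity combined with positive $1$-homogeneity then delivers convexity. Finiteness of $\Phi_f$ on all of $\R^{n\times n}$ is witnessed by the explicit decomposition $F=\sum_{i,j}F_{ij}\, e_i\otimes e_j$ together with positive $1$-homogeneity of $f$ in its first argument.

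The substantive content is the bound $m\leq n^2+1$, for which I would invoke Carath\'eodory's theorem in its conical form: every element of the convex conical hull of a subset of $\R^d$ is a non-negative combination of at most $d$ elements of that subset. Given an arbitrary admissible decomposition $(F,c)=\sum_{i=1}^m(\lambda_i\otimes\eta_i,f(\lambda_i,\eta_i))$, viewing the right-hand side as a non-negative combination of $m$ points in $K\subset\R^{n^2+1}$ and applying this theorem yields a re-expression $(F,c)=\sum_{j=1}^{n^2+1}\alpha_j(\lambda_j'\otimes\eta_j',f(\lambda_j',\eta_j'))$ with $\alpha_j\geq 0$. Absorbing each coefficient via the homogeneity identity $\alpha_j(\lambda_j'\otimes\eta_j',f(\lambda_j',\eta_j'))=((\alpha_j\lambda_j')\otimes\eta_j',f(\alpha_j\lambda_j',\eta_j'))$ produces a genuine decomposition with at most $n^2+1$ terms and the same total cost, so the infimum in \eqref{Phi1} can be restricted to such decompositions. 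I do not foresee a serious obstacle; the only mild pitfall is to remember that it is the conical, rather than the convex, version of Carath\'eodory that yields the clean bound $n^2+1$ instead of $n^2+2$, and that this improvement hinges precisely on $K$ already being closed under positive scaling.
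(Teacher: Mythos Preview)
Your argument is correct and rests on the same two ingredients as the paper's proof---Carath\'eodory's theorem together with the positive $1$-homogeneity of $f$ to absorb the coefficients---so the approaches are essentially equivalent. The packaging differs slightly: the paper introduces the extended-real-valued function $\phi_f\colon\R^{n\times n}\to[0,\infty]$ with $\phi_f(\lambda\otimes\eta)=f(\lambda,\eta)$ and $\phi_f=\infty$ off rank-one matrices, checks it is well defined (this uses the uniqueness lemma for rank-one decompositions and the evenness of $f$), and then identifies $\Phi_f$ with the convex envelope of $\phi_f$, invoking the convex Carath\'eodory bound $n^2+1$ in $\R^{n^2}$; you instead embed into $\R^{n^2+1}$ via the cone $K$ and apply the conical Carath\'eodory theorem directly. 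Your route is marginally more streamlined in that it sidesteps the well-definedness check for $\phi_f$, while the paper's formulation has the small bonus of explicitly recording that $\Phi_f$ is the convex envelope of $\phi_f$.
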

\begin{proof} 
	\textit{Step 1: Auxiliary function $\phi_f$.} First, we define the function
	\begin{align}\label{phi_extension}
	  	\phi_f\colon \R^{n\times n}\to[0,\infty),\ F\to \begin{cases}f(\lambda,\eta) &\text{ if } F = \lambda\otimes \eta \text{ for } (\lambda,\eta)\in\R^{n}\times\Scal^{n-1},\\ \infty &\text{ otherwise,}\end{cases}
	\end{align}
	and show that $\phi_f$ is well-defined.
	If $F=0$, then $F=0\times \eta$ for any $\eta\in\Scal^{n-1}$. Since $f$ is continuous and positively $1$-homogeneous in the first variable, it holds that $f(0,\eta)=0$. The case $F\neq 0$ can be handled via Lemma \ref{lem:symmetric_tensors} and the evenness of $f$.
	
	\textit{Step 2: Convex envelope of $\phi_f$.}
	As $f$ is positively $1$-homogeneous in the first variable the function $\phi_f$ is positively $1$-homogeneous.
	Its convex envelope	is then also positively $1$-homogeneous and coincides with $\Phi_f$, since
	  \begin{align*}
	  	&\inf\Big\{\sum_{i=1}^{n^2+1} \mu_i \phi_f(F_i) : \mu_i\geq 0, F_i\in\R^{n\times n} \text{ for all } i\in\{1,\ldots,n^2+1\}\\
		&\hspace{7cm}	 \text{ with } \sum_{i=1}^{n^2+1}\mu_i = 1 \text{ and }\sum_{i=1}^{n^2+1}\mu_iF_i = F\Big\}\\	  	
	  	&=\inf\Big\{\sum_{i=1}^{n^2+1} \mu_i f(\lambda_i,\eta_i) : \mu_i\geq 0, (\lambda_i,\eta_i)\in\R^n\times\Scal^{n-1} \text{ for all } i\in\{1,\ldots,n^2+1\}\\
		&\hspace{7cm}	 \text{ with } \sum_{i=1}^{n^2+1}\mu_i = 1 \text{ and }\sum_{i=1}^{n^2+1}\mu_i\lambda_i\otimes\eta_i = F\Big\}\\
		&=\inf\Big\{\sum_{i=1}^{n^2+1} f(\lambda_i,\eta_i) : (\lambda_i,\eta_i)\in\R^n\times\Scal^{n-1} \text{ for all } i\in\{1,\ldots,n^2+1\}\text{ with }\sum_{i=1}^{n^2+1}\lambda_i\otimes\eta_i =F\Big\}\\
		&=\inf\Big\{\sum_{i=1}^{m} f(\lambda_i,\eta_i) : m\in\N,\, (\lambda_i,\eta_i)\in \R^n\times\Scal^{n-1} \text{ for all } i\in\{1,\ldots,m\} \text{ with } \sum_{i=1}^m\lambda_i\otimes \eta_i =F\Big\}.
	  \end{align*}
	  The last two equalities are a direct consequence of the positive $1$-homogeneity of $f$ in the first variable and the proof of \cite[Theorem 2.35]{Dac08}.
\end{proof} 

\begin{remark}[Alternative representations of $\bm{\Phi_f}$]\label{rem:Phi_f_representations}
	Let $f\colon\R^n\times \Scal^{n-1}\to [0,\infty)$ be even, positively $1$-homogeneous and subadditive in the first variable, and continuous.
	In particular, $f$ satisfies
	\begin{align}\label{linear_bound}
		f(\lambda,\eta) \leq C|\lambda| \quad\text{for every $(\lambda,\eta)\in\R^n\times\Scal^{n-1}$}
	\end{align}
	with $C=\max_{\Scal^{n-1}\times\Scal^{n-1}} f$.
	According to \cite[Theorem 2.3]{Sil17}, the function $\Phi_f \colon \R^{n\times n}\to [0,\infty)$ in \eqref{Phi1} can then alternatively be expressed as
	\begin{align*}
		&\Phi_f(F) = \sup\big\{\theta(F) : \theta \text{ is subadditive on } \R^{n\times n},\, \theta(\lambda\otimes\eta)\leq f(\lambda,\eta) \text{ for all } (\lambda,\eta)\in\R^n\times\Scal^{n-1} \big\}\\[1mm]
		&=\inf\Big\{\int_{J_u}f([u],\nu_u)\dd\Hcal^{n-1} : u\in {\rm SBV}(Q_\eta;\R^n),\, \nabla u = 0 \text{ on } Q_\eta,\, u(x) = Fx \text{ for }x\in\partial Q_\eta\Big\}\\[1mm]
		&=\inf\Big\{\int_{J_u}f([u],\nu_u)\dd\Hcal^{n-1} : u\in {\rm SBV}(Q_\eta;\R^n),\, \int_{Q_\eta}\nabla u \dd x =0,\, u(x) = Fx \text{ for }x\in\partial Q_\eta\Big\}
	\end{align*}
	for every $F\in\R^{n\times n}$. Moreover, it holds that
	\begin{align*}
		\Phi_f(\lambda\otimes \eta) = \inf\Big\{\int_{J_u}f([u],\nu_u)\dd\Hcal^{n-1} : u\in {\rm SBV}(Q_\eta;\R^n),\, u=u_{\lambda,\eta} \text{ on }\partial Q_\eta,\,\nabla u = 0 \text{ on } Q_\eta\Big\}
	\end{align*}
	for every $(\lambda,\eta)\in\R^n\times\Scal^{n-1}$.
\end{remark}

Finally, we state the well-known fact that convexity and subadditivity are equivalent for positively $1$-homogeneous function. This result will be needed a few times in Section \ref{sec:BV}.

\begin{lemma}\label{lem:subadditivity}
Let $h\colon\R^n\to \R$ be positively $1$-homogeneous, then $h$ is subadditive if and only if $h$ is convex.
In this case, the function $h$ is also continuous.
\end{lemma}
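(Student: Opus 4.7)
The plan is to prove the equivalence by two short direct computations that use positive $1$-homogeneity to convert scalar multiples into function values, and then to deduce continuity from a standard result of convex analysis. A preliminary observation is that $h(0)=0$: this follows by applying positive $1$-homogeneity with $\alpha=2$ to the identity $h(0)=h(2\cdot 0)$. It handles all degenerate cases in what follows.

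For the direction \emph{convex $\Rightarrow$ subadditive}, I would fix $x,y\in\R^n$, write $x+y = 2\cdot\tfrac{x+y}{2}$, pull out the factor $2$ by positive $1$-homogeneity, and then apply convexity at the midpoint with weights $1/2, 1/2$. This yields $h(x+y)\leq h(x)+h(y)$ in two lines.

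For the reverse direction \emph{subadditive $\Rightarrow$ convex}, I would fix $x,y\in\R^n$ and $t\in(0,1)$, apply subadditivity to the decomposition $tx+(1-t)y$, and then rescale each summand separately via positive $1$-homogeneity with the positive scalars $t$ and $1-t$. The endpoint cases $t\in\{0,1\}$ reduce at once to $h(0)=0$.

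For the final continuity statement, once convexity is established, $h$ is a real-valued convex function defined on all of $\R^n$, and thus continuous on the interior of its effective domain, which here equals $\R^n$. I would simply cite this as a classical fact (e.g.~Rockafellar, \emph{Convex Analysis}, Cor.~10.1.1) rather than reprove it. I do not anticipate any serious obstacle: both implications reduce to one-line rescaling arguments, and continuity is a textbook result. The only point requiring mild care is to remember that positive $1$-homogeneity allows rescaling only by \emph{positive} scalars; since every scalar appearing in the arguments above ($2$, $t$, $1-t$, $1/2$) is strictly positive, no issue arises.
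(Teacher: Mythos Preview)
Your proposal is correct and entirely standard. The paper does not actually give a proof of this lemma: it merely states the result as ``the well-known fact that convexity and subadditivity are equivalent for positively $1$-homogeneous functions,'' so your argument fills in precisely the routine verification the authors chose to omit.
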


\section{$\rm BV$-ellipticity and related notions}\label{sec:BV}
\subsection{Basic definitions and properties}
First, we provide the reader with a few classes of functions that appear in the literature, though often without explicit notation, in the context of ${\rm BV}$-ellipticity and lower semicontinuity of surface energy functionals. For $\eta\in\Scal^{n-1}$, we introduce
\begin{align}\label{all_sets}
\SBV= & \{\varphi\in {\rm SBV}(Q_\eta;\R^n): \varphi=0 \text{ on $\partial Q_\eta$}\},\nonumber\\[.5em]
\Av= & \bigg\{\varphi\in {\rm SBV}(Q_\eta;\R^n):  \int_{Q_\eta}\nabla \varphi \dd x=0, \varphi=0 \text{ on $\partial Q_\eta$}\bigg\},\nonumber\\[.5em]
\PC= & \{\varphi\in {\rm SBV}(Q_\eta;\R^n): \nabla \varphi=0 \text{ on $Q_\eta$}, \varphi=0 \text{ on $\partial Q_\eta$}\},\nonumber\\[.5em]
\Ca= &\bigg\{\varphi \in {\rm SBV}(Q_\eta;\R^n): \varphi= \sum_{k\in\N}\lambda_k \mathbbm{1}_{P_k} \text{ with $\lambda_k\in \R^n$},\nonumber\\ 
	&\phantom{u \in {\rm SBV}(Q_\eta;\R^n):}\text{and $(P_k)_k$ a Caccioppoli partition and $\text{supp\,}\varphi \Subset Q_\eta$}\bigg\},\nonumber\\[.5em]
\J= & \{\varphi\in {\rm SBV}(Q_\eta;\R^n): \varphi=\lambda \mathbbm{1}_{P} \text{ with $\lambda\in \R^n$, $P\Subset Q_\eta$}\}.
\end{align}
The space $\SBV$ appears in \cite[Theorem 4.2.2]{BFM98}, the subset $\Av$ with vanishing average of the gradient and $\PC$ with vanishing gradient can be found in \cite[Theorem 2.16, Theorem 2.17]{ChF97}. The set of piecewise constant functions $\Ca$ (in the sense of Caccioppoli) is the standard class of test functions for ${\rm BV}$-ellipticity and appears in, for instance, \cite[Definition 5.13]{AFP00}. Single jumps in $\J$ are an addition of ours to round of the discussion about different ${\rm BV}$-ellipticity notions.

Clearly, it holds that
\begin{align}\label{sets}
\J \subset \Ca \subset \PC \subset \Av \subset \SBV.
\end{align}
All inclusions above are also strict: while the first and last one are obvious, the other other two might not be as easy to see.
For the second inclusion, we refer to the construction in \cite[Lemma 5.2]{Sil17}.
As for $\PC\subsetneq \Av$, we choose $A\in\R^{n\times n}\setminus\{0\}$ and $\eta\in\Scal^{n-1}$, define
\begin{align}\label{Qpm}
	Q_\eta^+  \coloneqq  \big\{x\in Q_\eta: x\cdot\eta\geq 0\big\}\quad\text{and}\quad  Q_\eta^-  \coloneqq  \big\{x\in Q_\eta: x\cdot\eta<0\big\},
\end{align}
and
$$\varphi(x)\coloneqq \begin{cases}
0 & \text{if $\displaystyle x\in Q_\eta\setminus\tfrac12Q_\eta$,}\\
\pm Ax & \text{if $x\in \frac12 Q_\eta^\pm$,}
\end{cases}\quad x\in Q_\eta,$$
and observe that $\varphi\in \Av\setminus \PC$.

Now, we introduce several ${\rm BV}$-ellipticity notions with varying classes of test functions.
\begin{definition}[$\rm \bm{BV}$-ellipticity]\label{def:surface_qc}
	Let $f\colon\R^n\times \Scal^{n-1}\to [0,\infty)$ be an even function.
	We say that $f$ is ${\rm BV}$-elliptic if for any $\lambda\in \R^n$ and $\eta\in \Scal^{n-1}$ it holds that
	\begin{align}\label{surface_qc}
		f(\lambda, \eta) \leq \int_{J_u} f\big([u], \nu_u\big) \dd \Hcal^{n-1}
	\end{align}
	for all $u\in u_{\lambda, \eta} + \Ca$. 
	
	More generally, we say that $f$ is ${\rm BV}$-elliptic with respect to $\SBVs$/ $\PCs$/ $\Avs$/ $\Js$ when \eqref{surface_qc} holds with
	$\Ca$ replaced by one of the corresponding sets introduced in \eqref{all_sets}.
\end{definition}

In Section \ref{subsec:BV_char}, we provide characterizations of all these ${\rm BV}$-ellipticity notions. 
While ${\rm BV}$-ellipticity with respect to the extreme cases $\Js$ and $\SBVs$ result in trivial statements, ${\rm BV}$-ellipticity with respect to $\Cas$, $\PCs$, and $\Avs$ are all equivalent and coincide, under suitable conditions, with biconvexity. In general, the latter is a stronger notion and is much easier to verify. We introduce this concept in the next definition and adapt it from \cite[Section 2.2]{AmB90ii}.

\begin{definition}[Biconvexity]\label{def:biconvexity}
	We call $f\colon \R^n\times \Scal^{n-1}\to [0,\infty)$ biconvex if there exists a convex, positively $1$-homogeneous function $\Phi\colon \R^{n\times n}\to [0,\infty)$ such that 
	$$f(\lambda,\eta) = \Phi\big(\lambda\otimes \eta\big)$$
	for every $(\lambda,\eta)\in\R^n\times \Scal^{n-1}$.
\end{definition}

Naturally, any biconvex map has to be positively $1$-homogeneous in the first argument.
It was already conjectured in \cite[Page 9]{AmB90ii} that ${\rm BV}$-ellipticity with respect to $\Cas$ and biconvexity are equivalent concepts.
This conjecture can, however, not be true in general. 
To see this, one needs to construct a ${\rm BV}$-elliptic function that is not a positively $1$-homogeneous in the first variable.
This can be done with the help of a jointly convex one with said property, cf.~Remark \ref{rem:joint_convexity} or \cite[Definition 5.17]{AFP00}, as every jointly convex function is ${\rm BV}$-elliptic with respect to $\Cas$ in view of \cite[Theorem 5.20]{AFP00}.
If one amends this question by requiring positive $1$-homogeneity in the first variable, then the two definitions are indeed equivalent, as is shown in Theorem \ref{theo:BV_elliptic} below.

We now gather a few properties emanating from ${\rm BV}$-ellipticity with respect to $\Js$. 
This statement can be drawn from combining and adapting the proofs of Theorem~5.11 and Theorem~5.14 in \cite{AFP00}. 
The benefit of our argument is a direct proof which does not rely on lower semicontinuity arguments.

\begin{proposition}\label{prop:subadditivity}
If $f \colon \R^{n}\times \Scal^{n-1}\to[0,\infty)$ is even and ${\rm BV}$-elliptic with respect to $\Js$, then the following statements hold true.
\begin{itemize}
\item[$i)$] For any $\eta\in \Scal^{n-1}$, the function $f(\cdot, \eta)\colon\R^n\to [0,\infty)$ is subadditive.  
\item[$ii)$] For any $\lambda\in\R^n$, the function $\bar{f}(\lambda, \cdot) \colon \R^n\to [0,\infty)$ (cf.~\eqref{1hom_extension}) is convex.
\end{itemize}
\end{proposition}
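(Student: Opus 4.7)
My plan is to prove both statements by inserting into \eqref{surface_qc} competitors of the form $u = u_{\lambda,\eta} + \lambda'\mathbbm{1}_P \in u_{\lambda,\eta}+\Js$, with $P\Subset Q_\eta$ chosen so that the resulting jump geometry yields the targeted inequality after a limit. For Part (i), fix $\eta\in\Scal^{n-1}$, $\lambda_1,\lambda_2\in\R^n$, set $\lambda\coloneqq\lambda_1+\lambda_2$, and take the thin slab just above the midplane,
\[
P_{\delta,\eps}\coloneqq\bigl\{x\in Q_\eta : 0 < x\cdot\eta < \delta,\ -\tfrac12+\eps < x\cdot\eta^\perp < \tfrac12-\eps\bigr\}\Subset Q_\eta,
\]
together with the competitor $u = u_{\lambda,\eta} - \lambda_2\mathbbm{1}_{P_{\delta,\eps}}$. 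A direct calculation shows that $u$ is $0$ below the midplane, $\lambda_1$ inside the slab, and $\lambda$ above it, so its jump set splits into: the portion of the midplane inside the slab's footprint (jump $\lambda_1$, area $(1-2\eps)^{n-1}$); the rest of the midplane in $Q_\eta$ (jump $\lambda$); the top face of the slab (jump $\lambda_2$, normal $\eta$, area $(1-2\eps)^{n-1}$); and the lateral faces of the slab (jump $\lambda_2$, normals orthogonal to $\eta$, total area $O(\delta)$). Inserting into \eqref{surface_qc}, rearranging, and letting first $\delta\to 0$ and then $\eps\to 0$ yields $f(\lambda,\eta)\leq f(\lambda_1,\eta)+f(\lambda_2,\eta)$.

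For Part (ii), since $\bar{f}(\lambda,\cdot)$ is positively $1$-homogeneous by \eqref{1hom_extension}, Lemma \ref{lem:subadditivity} reduces convexity to the subadditivity estimate
\[
\alpha_3 f(\lambda,\nu_3) \leq \alpha_1 f(\lambda,\nu_1) + \alpha_2 f(\lambda,\nu_2),
\]
which I have to establish for all $\nu_i\in\Scal^{n-1}$ and $\alpha_i>0$ satisfying $\alpha_3\nu_3=\alpha_1\nu_1+\alpha_2\nu_2$ (the degenerate cases being immediate). The vectors $\nu_1,\nu_2,\nu_3$ lie in a common $2$-plane $\Pi$; in $\Pi$ I would invoke Minkowski's theorem to produce a small triangle $T_\eps\subset\Pi$ with outward normals $-\nu_3,\nu_1,\nu_2$ and edge lengths $\eps\alpha_3,\eps\alpha_1,\eps\alpha_2$, placing its base on the midplane $\{x\cdot\nu_3=0\}$. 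Convexity of $T_\eps$ combined with the fact that the base has outward normal $-\nu_3$ forces $T_\eps\subset\{x\cdot\nu_3\geq 0\}$, regardless of the signs of $\nu_i\cdot\nu_3$; for $n\geq 3$ I thicken $T_\eps$ in the orthogonal $(n-2)$-dimensional subspace $\Pi^\perp$ by a cube of side $\delta$ to obtain a prism $P_{\eps,\delta}\Subset Q_{\nu_3}$. Testing \eqref{surface_qc} with $(\lambda,\nu_3)$ and the competitor $u = u_{\lambda,\nu_3} - \lambda\mathbbm{1}_{P_{\eps,\delta}}$, the inclusion $P_{\eps,\delta}\subset\{x\cdot\nu_3\geq 0\}$ gives $u\equiv 0$ on $P_{\eps,\delta}$, so the midplane jump cancels exactly above the base footprint; the surviving jumps are the midplane off the footprint (jump $\lambda$, normal $\nu_3$, area $1-\eps\alpha_3\delta^{n-2}$), the two slant prism faces (jump $\lambda$, normals $\nu_i$, areas $\eps\alpha_i\delta^{n-2}$), and the lateral prism faces in the $\Pi^\perp$ directions (jump $\lambda$, total area $O(\eps^2\delta^{n-3})$). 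Dividing the resulting inequality by $\eps\delta^{n-2}$ and letting $\eps\to 0$ with $\delta$ fixed delivers the claim.

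The delicate step is the geometric setup in (ii): one has to verify that the Minkowski triangle always sits on the correct side of the midplane (this is precisely what makes the midplane jump cancel over the entire base footprint and thereby produces the coefficient $\alpha_3 f(\lambda,\nu_3)$ after rearranging), and then arrange the two scales so that the lateral prism contribution of size $\eps^2\delta^{n-3}$ stays strictly below the dominant $\eps\delta^{n-2}$ scale, which forces the regime $\eps/\delta\to 0$. By contrast, Part (i) is straightforward once the slab has been chosen, as its $O(\delta)$ lateral contribution is immediately negligible.
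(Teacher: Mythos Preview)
Your proof is correct and follows essentially the same strategy as the paper's: for (i), a thin slab just above the midplane; for (ii), a Minkowski triangle (thickened to a prism for $n\geq 3$) with base on the midplane and slant faces with the prescribed normals $\nu_1,\nu_2$. The only implementation difference in (ii) is that the paper tiles nearly the whole midplane with $\sim k$ shifted copies of the triangle (a sawtooth) so that the uncovered midplane contribution vanishes directly as $k\to\infty$, whereas you use a single small triangle and remove the midplane contribution by rearranging the inequality before dividing by $\eps\delta^{n-2}$; both routes give the same estimate.
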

\begin{proof}
$i)$ For fixed $\lambda, \xi\in \R^n$ and $\eta\in \Scal^{n-1}$, we use the single-jump test functions $(\varphi_k)_k\subset \J$ defined by
\begin{align*}
	\varphi_k = -\xi\mathbbm{1}_{\{0\leq x\cdot \eta \leq \frac{1}{k}, -\frac{1}{2}+\frac{1}{2k} \leq x\cdot \eta^\perp \leq \frac{1}{2} - \frac{1}{2k}\}}, 	\qquad \text{for $k>2$,}
\end{align*}
see Figure~\ref{fig:one}(a); recall also the notation \eqref{perpendicular}. 
Setting $u_k\coloneqq u_{\lambda,\eta}+\varphi_k$ (see Figure~\ref{fig:one}(b)), the ${\rm BV}$-ellipticity with respect to $\J$ gives
\begin{equation*}
\begin{split}
f(\lambda,\eta)\leq & \int_{J_{u_k}} f([u_k], \nu_{u_k})\dd{\Hcal^{n-1}} \\
= & \Big(1-\frac1k\Big)^{n-1} \big(f(\lambda-\xi,\eta)+f(\xi,\eta)\big) \\ & \qquad \qquad + \frac1k \Big(1-\frac1k\Big)^{n-2} \bigg(2f(\lambda,\eta)+\sum_{i=1}^{n-1} \big(f(-\xi,\zeta_i)+f(-\xi,-\zeta_i)\big)\bigg);
\end{split}
\end{equation*}
letting $k\to \infty$ yields
\begin{align*}
	f(\lambda, \eta) \leq f(\lambda -\xi, \eta) + f(\xi, \eta).
\end{align*}
Substituting $\lambda$ by $\lambda+\xi$ implies
\begin{align*}
	f(\lambda + \xi, \eta)\leq f(\lambda, \eta) + f(\xi, \eta)  \qquad \text{for $\lambda, \xi\in \R^{n}$,}
\end{align*}
which means that $f(\cdot, \eta)$ is subadditive for any fixed $\eta\in \Scal^{n-1}$. 

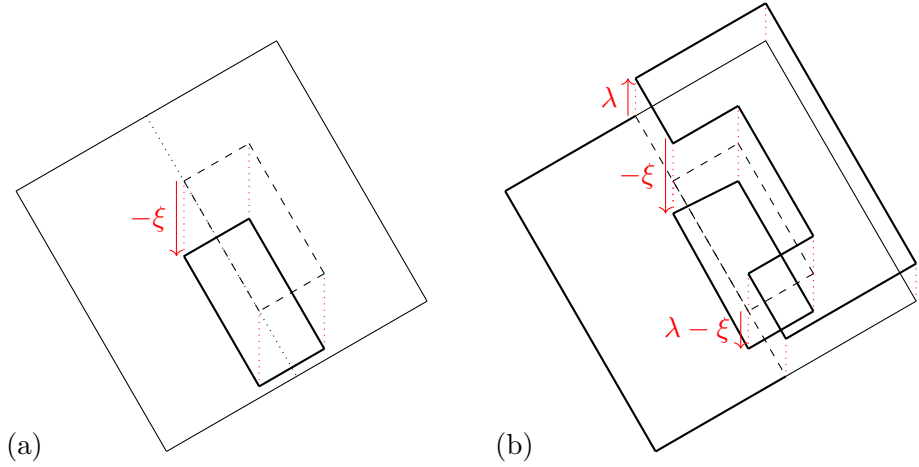
\begin{figure}[!ht]
\begin{center}
\begin{tikzpicture}
\node at (-3,-3) [text=black,above right] {(a)};
\draw[color=black] (.732,2.732) -- (-2.732,.732);
\draw[color=black] (-.732,-2.732) -- (-2.732,.732);
\draw[color=black] (-.732,-2.732) -- (2.732,-.732);
\draw[color=black] (.732,2.732) -- (2.732,-.732);
\draw[color=black,dotted] (-1,1.732) -- (1,-1.732);
\draw[color=black,dashed] (1.366,-.366) -- (.366,1.366);
\draw[color=black,dashed] (-.5,.866) -- (.366,1.366);
\draw[color=black,dashed] (-.5,.866) -- (.5,-.866);
\draw[color=black,dashed] (1.366,-.366) -- (.5,-.866);
\begin{scope}[shift={(0,-1)}]
\draw[color=red,dotted] (1.366,-.366) -- (1.366,.634);
\draw[color=red,dotted] (-.5,.866) -- (-.5,1.866);
\draw[color=red,dotted] (.5,-.866) -- (.5,.134);
\draw[color=red,dotted] (.366,1.366) -- (.366,2.366);
\draw[color=red,<-] (-.6,.866) --++ (0,0.5) node [left] {$-\xi$} --++ (0,0.5);
\end{scope}
\begin{scope}[shift={(0,-2)}]
\draw[color=black,thick] (1.366,.634) -- (.366,2.366);
\draw[color=black,thick] (.366,2.366) -- (-.5,1.866);
\draw[color=black,thick] (-.5,1.866) -- (.5,.134);
\draw[color=black,thick] (.5,.134) -- (1.366,.634);
\end{scope}
\end{tikzpicture}\qquad
\begin{tikzpicture}
\node at (-3,-3) [text=black,above right] {(b)};
\draw[color=black] (.732,2.732) -- (-1,1.732);
\draw[color=black,thick] (-1,1.732) -- (-2.732,.732);
\draw[color=black,thick] (-.732,-2.732) -- (-2.732,.732);
\draw[color=black,thick] (-.732,-2.732) -- (1,-1.732);
\draw[color=black] (2.732,-.732) -- (1,-1.732);
\draw[color=black] (.732,2.732) -- (2.732,-.732);
\draw[color=black,dashed] (-1,1.732) -- (1,-1.732);
\draw[color=black,dashed] (1.366,-.366) -- (.366,1.366);
\draw[color=black,dashed] (-.5,.866) -- (.366,1.366);
\draw[color=black,dashed] (-.5,.866) -- (.5,-.866);
\draw[color=black,dashed] (1.366,-.366) -- (.5,-.866);
\draw[color=red,dotted] (1.366,-.366) -- (1.366,-.866);
\draw[color=red,dotted] (-.5,.866) -- (-.5,.434);
\draw[color=red,dotted] (.5,-.866) -- (.5,-1.366);
\draw[color=red,dotted] (.366,1.366) -- (.366,.866);
\draw[color=black,thick] (1.366,-.866) -- (.366,.866);
\draw[color=black,thick] (.366,.866) -- (-.5,.434);
\draw[color=black,thick] (-.5,.434) -- (.5,-1.366);
\draw[color=black,thick] (.5,-1.366) -- (1.366,-.866);
\draw[color=red,dotted] (-1,1.732) -- (-1,2.232);
\draw[color=red,dotted] (1,-1.232) -- (1,-1.732);
\draw[color=red,dotted] (.732,2.732) -- (.732,3.232);
\draw[color=red,dotted] (2.732,-.732) -- (2.732,-.232);
\draw[color=red,dotted] (-.5,1.366) -- (-.5,.434);
\draw[color=red,dotted] (1.366,-.366) -- (1.366,.134);
\draw[color=red,dotted] (.5,-.866) -- (.5,-.366);
\draw[color=red,dotted] (.366,1.366) -- (.366,1.866);
\draw[color=red,->] (-1.1,1.732) --++(0,.25) node [left] {$\lambda$} --++ (0,.25);
\draw[color=red,<-] (-.6,.434) --++(0,.5) node [left] {$-\xi$} --++ (0,.5);
\draw[color=red,<-] (.4,-1.366) --++(0,.25) node [left] {$\lambda-\xi$} --++ (0,.25);
\draw[color=black,thick] (-1,2.232) -- (-.5,1.366);
\draw[color=black,thick] (-.5,1.366) -- (.366,1.866);
\draw[color=black,thick] (.366,1.866) -- (1.366,.134);
\draw[color=black,thick] (1.366,.134) -- (.5,-.366);
\draw[color=black,thick] (.5,-.366) -- (1,-1.232);
\draw[color=black,thick] (1,-1.232) -- (2.732,-.232);
\draw[color=black,thick] (2.732,-.232) -- (.732,3.232);
\draw[color=black,thick] (.732,3.232) -- (-1,2.232);
\end{tikzpicture}
\captionof{figure}{(a) the function $\varphi_k$ and (b) the function $u_k$ in dimension $n=2$ (here pictured for $k=4$), the dashed lines marking the jump set $J_{u_k}$. For the purpose of illustration, here we have taken $\lambda,\xi\in\R^2$ with vanishing first component.}
\label{fig:one}
\end{center}
\end{figure}
\smallskip

$ii)$  In view of Lemma~\ref{lem:subadditivity}, we can equivalently show that $\eta\mapsto \bar{f}(\lambda,\eta)$ is subadditive on $\R^n$.
We illustrate the proof in the case $n=2$ for clarity and indicate how to modify it for general dimensions $n\in\N$.
 
Let $\eta_1,\eta_2\in\R^2$ and set $\eta_0\coloneqq\eta_1+\eta_2$ and $\tilde\eta_j\coloneqq\eta_j/|\eta_j|$ for every $j\in\{0,1,2\}$. 
On the ``upper'' side of the square $Q_{\tilde\eta_0}$ we build a triangle $T^0$ with side lengths $L_0=\rho>0$, $L_1=\rho|\eta_1|/|\eta_0|$, and $L_2=\rho|\eta_2|/|\eta_0|$ and normals $\tilde\eta_0$, $-\tilde\eta_1$, and $-\tilde\eta_2$, respectively. 
Notice that the triangle ``closes'' because of the definition of $\eta_0$.
Setting $\rho=\frac{1}{2k}$, we let $T^k$ be the corresponding triangle, and we denote by $T_k$ the union of $2k-2$ triangles given by shifting $T^k$ along the upper side of $Q_{\tilde\eta_0}$ so that $\dist(T_k,\partial Q_\eta)\geq\frac{1}{2k}$.
Let now $\varphi_k\in \J$ be the function taking the value $-\lambda$ on $T_k$, and use $u_k=u_{\lambda,\tilde\eta_0}+\varphi_k$ as a test function for the ${\rm BV}$-ellipticity of $f$ with respect to $\Js$. 
We have
\begin{equation}\label{2dcase}
\begin{split}
\bar{f}(\lambda,\eta_0)=&\, |\eta_0|f(\lambda,\tilde\eta_0)\leq|\eta_0|\int_{J_{u_k}} f([u_k],\nu_{u_k})\dd\Hcal^1 \\
=&\,|\eta_0|(2k-2)\bigg[f(\lambda,\tilde\eta_1)\frac{1}{2k}\frac{|\eta_1|}{|\eta_0|}+f(\lambda,\tilde\eta_2)\frac{1}{2k}\frac{|\eta_2|}{|\eta_0|}\bigg]\to \bar{f}(\lambda,\eta_1)+\bar{f}(\lambda,\eta_2)
\end{split}
\end{equation}
as $k\to \infty$, so that $f$ is subadditive.

To deduce the statement for arbitrary dimensions, it suffices to use the previous construction on a thin cuboid whose base is a $2$-dimensional section of the cube $Q_{\tilde\eta_0}$, and whose thickness is $\frac{1}{2k}$ in the remaining $n-2$ dimensions. 
In doing so, the bracket in \eqref{2dcase} must be modified to
$$\bigg(1-\frac1k\bigg)^{n-2}|\eta_0|(2k-2)\bigg[f(\lambda,\tilde\eta_1)\frac{1}{2k}\frac{|\eta_1|}{|\eta_0|}+f(\lambda,\tilde\eta_2)\frac{1}{2k}\frac{|\eta_2|}{|\eta_0|}\bigg]+\bigg(\frac{1}{2k}\bigg)^{n-2}|\eta_0|f(\xi,\bar\lambda),$$
which reduces to the right-most term in \eqref{2dcase} in the limit as $k\to\infty$.
\end{proof}

\begin{remark}\label{rem:sep_cvx}
Let $f\colon \R^{n}\times \Scal^{n-1}\to[0,\infty)$ be even, positively $1$-homogeneous in the first variable, and ${\rm BV}$-elliptic with respect to $\Js$, then
$\bar f$ is separately convex, in view of Lemma~\ref{lem:subadditivity}. 
In particular, it follows that $\bar{f}$, and thus also $f$, is continuous and satisfies \eqref{linear_bound}.
\end{remark}

\subsection{Characterization results}\label{subsec:BV_char}

In this section we show that, under suitable assumptions, ${\rm BV}$-ellipticity with respect to the extreme sets $\Js$ and $\SBVs$ turn out to be rather trivial and that all other ${\rm BV}$-ellipticity notions introduced in Definition \ref{def:surface_qc} coincide.

\begin{proposition}\label{prop:trivial}
	If $f\colon \R^n\times \Scal^{n-1}\to [0,\infty)$ is even and positively $1$-homogeneous in the first argument, then $f$ is ${\rm BV}$-elliptic with respect to $\SBVs$ if and only if $f=0$.
\end{proposition}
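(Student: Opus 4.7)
The implication $f \equiv 0 \Rightarrow f$ is ${\rm BV}$-elliptic with respect to $\SBVs$ is immediate, since \eqref{surface_qc} then reduces to $0 \leq 0$. For the converse, the plan is to exploit the fact that $\SBVs$ is so large that it contains competitors with empty jump set, which will force $f(\lambda, \eta) = 0$ for every $(\lambda, \eta)$ by non-negativity.

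Concretely, fix $(\lambda, \eta) \in \R^n \times \Scal^{n-1}$. The candidate competitor is any $u \in W^{1,1}(Q_\eta; \R^n)$ whose trace on $\partial Q_\eta$ coincides with $u_{\lambda, \eta}|_{\partial Q_\eta}$. Since the latter belongs to $L^\infty(\partial Q_\eta; \R^n) \subset L^1(\partial Q_\eta; \R^n)$, Gagliardo's trace theorem applied on the Lipschitz domain $Q_\eta$ produces such a $u$. As a $W^{1,1}$-map, $u$ lies in ${\rm SBV}(Q_\eta; \R^n)$ with $J_u = \emptyset$, and the difference $u - u_{\lambda, \eta}$ lies in $\SBVs$ by linearity of the trace operator. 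Testing \eqref{surface_qc} with this $u$ yields
\begin{equation*}
	f(\lambda, \eta) \leq \int_{J_u} f([u], \nu_u) \dd \Hcal^{n-1} = 0,
\end{equation*}
so that $f(\lambda, \eta) = 0$; since $(\lambda, \eta)$ was arbitrary, $f \equiv 0$.

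The main (and essentially only) subtlety is the existence of a $W^{1,1}$-extension of the discontinuous trace $u_{\lambda, \eta}|_{\partial Q_\eta}$, which jumps across the codimension-two set $\{x \cdot \eta = 0\} \cap \partial Q_\eta$. This is precisely the borderline case where Gagliardo's theorem applies but higher Sobolev extension fails. Should an explicit construction be preferred, one can localize near the singular set and interpolate by a rescaled polar-angle function in the plane normal to the singularity; its gradient behaves like $1/r$ and is integrable for $n \geq 2$ against the volume element $r^{n-2} \dd r$, while away from this set a smooth interpolation between the constant values $0$ and $\lambda$ suffices. The conceptual point behind the triviality of this notion is that permitting an absolutely continuous gradient in the test function bypasses any need for the jump set to carry the change from $0$ to $\lambda$.
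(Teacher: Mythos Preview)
Your proof is correct and takes a genuinely different route from the paper's. The paper constructs an explicit sequence $u_k = u_{\lambda,\eta} + \psi_k\mathbbm{1}_{Q_\eta^+}$ with $\psi_k\in C_c^\infty(Q_\eta;\R^n)$ equal to $-\lambda$ on $(1-\tfrac{1}{k})Q_\eta$; the jump set of $u_k$ is a thin collar $N_k\subset\{x\cdot\eta=0\}$ of vanishing $\Hcal^{n-1}$-measure, and the surface energy is driven to zero using subadditivity of $f(\cdot,\eta)$ (from Proposition~\ref{prop:subadditivity}) together with the positive $1$-homogeneity. Your argument instead produces in one stroke a competitor with \emph{empty} jump set by invoking Gagliardo's surjectivity of the trace $W^{1,1}(Q_\eta;\R^n)\to L^1(\partial Q_\eta;\R^n)$. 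This is cleaner and, notably, does not actually use the positive $1$-homogeneity hypothesis at all: only non-negativity of $f$ and the definition of $\SBVs$-ellipticity are needed. What the paper's approach buys in exchange is that it is entirely self-contained and elementary (no trace-extension theorem), with an explicit competitor built from smooth cut-offs, and it fits the general pattern used elsewhere in the paper of testing with concrete piecewise-defined functions.
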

\begin{proof} 
For any $(\lambda,\eta)\in \R^n\times\Scal^{n-1}$ and $k\in\N$, let $\psi_k\in C^\infty_c(Q_\eta;\R^n)$ such that 
\begin{align*}
\psi_k=-\lambda\text{ in }(1-\tfrac{1}{k})Q_\eta \qand\sup_k \lVert \psi_k \rVert_\infty\leq C<+\infty,
\end{align*} 
for some positive constant $C$.
We define $\varphi_k \coloneqq \psi_k\mathbbm{1}_{Q_\eta^+}$, with $Q_{\eta}^+$ as in \eqref{Qpm} and observe that $\varphi_k\in \SBV$ by design. 
The only jumps of $u_k$ appear on the set
$$N_k \coloneqq \big\{x\in Q_\eta : x\cdot\eta = 0,\, -\tfrac{1}{2} + \tfrac{1}{2k} \leq x\cdot\eta^\perp \leq \tfrac{1}{2} - \tfrac{1}{2k}\big\}.$$
Since $f$ is ${\rm BV}$-elliptic with respect to $\SBVs$, one obtains with the test fields $u_k\coloneqq u_{\lambda,\eta}+\varphi_k$ and in view of Proposition~\ref{prop:subadditivity}\,$i)$ that 
\begin{align*}
	f(\lambda, \eta) & \leq \int_{J_{u_k}} f\big([u_k], \nu_{u_k}\big) \dd\Hcal^{n-1} = \int_{N_k} f\big(\lambda +\psi_k, \eta\big) \dd\Hcal^{n-1} \\ 
	&\leq \Hcal^{n-1}(N_k) f(\lambda, \eta) + C \int_{N_k\cap \{\psi_k\neq 0\}} \Big|f\bigg(\frac{\psi_k}{\lVert\psi_k\rVert_{L^\infty}}, \eta\bigg)\Big| \dd\Hcal^{n-1}\\ 
	&\leq (1+C)\max_{\xi\in \Scal^{n-1}} |f(\xi, \eta)| \Hcal^{n-1}(N_k) \to 0.
\end{align*}
The second step exploits that $f(0, \eta)=0$ for all $\eta\in \Scal^{n-1}$, noting that $f(\cdot, \eta)$ is continuous by Lemma~\ref{lem:subadditivity}, and the third one makes use of the subadditivity in combination with the positive $1$-homogeneity of $f$;
for the final two steps, we have used that $f$ is continuous in its first variable, and hence, $f(\cdot, \eta)$ is uniformly bounded on $\Scal^{n-1}$, and that 
$$\Hcal^{n-1} (N_k)\leq 1-(1-\tfrac{1}{k})^{n-1}$$ tends to zero as $k\to 0$.
This shows that $f\leq 0$, from which we conclude that $f=0$ since $f$ is non-negative by assumption. 
\end{proof}

The next theorem shows that separate convexity of the positively $1$-homogeneous extension of $f\colon\R^n\times\Scal^{n-1}\to[0,\infty)$ in the second variable is sufficient for the ${\rm BV}$-ellipticity of $f$ with respect to $\Js$.

\begin{proposition}\label{prop:J0_BV}
If $f\colon \R^n\times \Scal^{n-1}\to [0,\infty)$ is even and positively $1$-homogeneous in the first variable, then $f$ is separately convex if and only if $f$ is ${\rm BV}$-elliptic with respect to $\Js$.
\end{proposition}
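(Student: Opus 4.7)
The forward implication is essentially immediate from the results preceding the statement: Proposition~\ref{prop:subadditivity}\,$i)$ gives subadditivity of $f(\cdot,\eta)$ on $\R^n$ for each $\eta\in\Scal^{n-1}$, which combined with positive $1$-homogeneity and Lemma~\ref{lem:subadditivity} upgrades to convexity, while part $ii)$ states that $\bar f(\lambda,\cdot)$ is convex on $\R^n$ for each $\lambda\in\R^n$; together, these say precisely that $\bar f$ is separately convex (cf.~Remark~\ref{rem:sep_cvx}).

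For the converse, the plan is to verify the defining inequality of $\rm BV$-ellipticity with respect to $\Js$ on a generic test function $u=u_{\lambda,\eta}+\mu\mathbbm{1}_P$, with $\mu\in\R^n$ and $P\Subset Q_\eta$ a set of finite perimeter. Using the one-sided traces $\mathbbm{1}_P^+$ and $\mathbbm{1}_P^-$ of $\mathbbm{1}_P$ on the interface $\{x\cdot\eta=0\}\cap Q_\eta$, I would partition the interface into four $\Hcal^{n-1}$-measurable pieces $A_{00},A_{11},A_+,A_-$ indexed by the values of $(\mathbbm{1}_P^+,\mathbbm{1}_P^-)\in\{0,1\}^2$; a direct computation of the one-sided traces of $u$ shows that the jump $[u]$ takes, on these pieces, respectively the values $\lambda,\lambda,\lambda+\mu,\lambda-\mu$ (all with normal $\eta$), whereas on $\partial^*P\setminus\{x\cdot\eta=0\}$ one has $[u]=\mu$ and $\nu_u=-\nu_P$, where $\nu_P$ denotes the outer unit normal of $P$. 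Applying the Gauss--Green formula to $\mathbbm{1}_{P^\pm}$ with $P^\pm:=P\cap\{\pm x\cdot\eta>0\}$ would then yield
\[
\int_{\partial^*P\cap\{\pm x\cdot\eta>0\}}\nu_P\,\dd\Hcal^{n-1}=\pm(A_\pm+A_{11})\eta.
\]

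Since $\bar f(\mu,\cdot)$ is convex and positively $1$-homogeneous, it is subadditive, and standard approximation upgrades this to a Jensen-type estimate $\bar f(\mu,\int g\,\dd\sigma)\leq\int\bar f(\mu,g)\,\dd\sigma$ for positive finite measures $\sigma$ and integrable $\R^n$-valued maps $g$. I would apply this with $g=-\nu_P$ on each half-space separately, and use evenness of $f$ in the form $f(\mu,-\eta)=f(-\mu,\eta)$, to obtain the lower bound $(A_++A_{11})f(-\mu,\eta)+(A_-+A_{11})f(\mu,\eta)$ for the integral of $f([u],\nu_u)$ over $\partial^*P\setminus\{x\cdot\eta=0\}$. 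Combined with the interface contribution $A_{00}f(\lambda,\eta)+A_{11}f(\lambda,\eta)+A_+f(\lambda+\mu,\eta)+A_-f(\lambda-\mu,\eta)$ and the subadditivity relations $f(\lambda\pm\mu,\eta)+f(\mp\mu,\eta)\geq f(\lambda,\eta)$ (together with $f(\mu,\eta)+f(-\mu,\eta)\geq 0$), the four area weights assemble into $A_{00}+A_{11}+A_++A_-=1$ multiplying $f(\lambda,\eta)$, which is the desired conclusion. The main obstacle is bookkeeping: pinning down $[u]$ and $\nu_u$ correctly on every piece of $J_u$ and arranging signs so that, via evenness, the portions of $\partial^*P$ above and below the interface pair naturally with $A_-$ and $A_+$, respectively, so that the subadditivity step closes.
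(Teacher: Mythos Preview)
Your proposal is correct and follows essentially the same route as the paper: split $J_u$ into the interface $N_\eta$ and $\partial^*P\setminus N_\eta$, apply Gauss--Green to $P^\pm$ to compute $\int\nu_P$ on each half, use Jensen for the positively $1$-homogeneous convex map $\bar f(\mu,\cdot)$, and close with subadditivity of $f(\cdot,\eta)$. Your trace-based partition $A_{00},A_{11},A_+,A_-$ of $N_\eta$ is exactly the paper's decomposition into $N_\eta\setminus\partial P$, $\partial P^+\cap\partial P^-\cap N_\eta$, $(\partial P^+\setminus\partial P^-)\cap N_\eta$, $(\partial P^-\setminus\partial P^+)\cap N_\eta$, only phrased more cleanly. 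One small slip: in your final sentence you swap the pairing---with your own bound $(A_++A_{11})f(-\mu,\eta)+(A_-+A_{11})f(\mu,\eta)$, the portion of $\partial^*P$ \emph{above} the interface produces the $f(-\mu,\eta)$ term, which pairs with $A_+f(\lambda+\mu,\eta)$, not with $A_-$; the computation you actually sketched is the right one.
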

\begin{proof}
The necessity follow immediately from Remark~\ref{rem:sep_cvx}. We now turn to the proof of sufficiency.
Let $(\lambda, \eta)\in \R^n\times \Scal^{n-1}$ and $\varphi\in \J$ with $\varphi=\xi \mathbbm{1}_P$ for $\xi\in \R^n$ and $P\Subset Q_\eta$ a set of finite perimeter. 
Moreover, take $P^+\coloneqq P\cap Q_\eta^+$ and $P^-\coloneqq P\cap Q_\eta^{-}$ with $Q_\eta^{\pm}$ as in \eqref{Qpm}. 
In the following, we take $\partial P$ as the reduced boundary of $P$ in the measure-theoretic sense and let $\nu_P\colon \partial P\to \Scal^{n-1}$ be the generalized outer normal to $P$ according to \cite[Section~3.5]{AFP00}; and analogously for $P^{\pm}$.
 
Setting $N_\eta\coloneqq\{x\in Q_\eta: x\cdot \eta=0\}$,
by the measure-theoretic Gau{\ss}-Green formula (see e.g.~\cite{AFP00}), 
\begin{align}\label{GaussGreen}
	\int_{\partial P^{\pm}\setminus N_\eta} \nu_{P^{\pm}}\dd\Hcal^{n-1} = - \int_{\partial P^{\pm}\cap N_\eta} \nu_{P^\pm} \dd{\Hcal^{n-1}}
 	=\pm \Hcal^{n-1}(\partial P^\pm \cap N_\eta)\eta.
\end{align}

By Jensen's inequality, exploiting the convexity of $\bar{f}$ in its second variable, we find together with~\eqref{GaussGreen} that 
\begin{subequations}\label{44}
\begin{align}\label{44a}
\begin{split}
		\int_{\partial P^+\setminus N_\eta}  f(-\xi, \nu_{P^+}) \dd{\Hcal^{n-1}}
   		&\geq \Hcal^{n-1}(\partial P^+\setminus N_\eta) f\Bigl(-\xi,  \dashint_{\partial P^+\setminus N_\eta}  \nu_{P^+}\dd{\Hcal^{n-1}}\Bigr) \\ 
   		& =  \Hcal^{n-1}(\partial P^+\setminus N_\eta) f(-\xi, \eta),
   	\end{split}
\end{align}
and similarly, 
\begin{align}
\int_{\partial P^-\setminus N_\eta}  f(-\xi, \nu_{P^-}) \dd{\Hcal^{n-1}} = \Hcal^{n-1}(\partial P^-\setminus N_\eta) f(-\xi, -\eta).
\end{align}
\end{subequations}

Further, let $u=u_{\lambda, \eta}+\varphi$ and invoke~\eqref{44} and the evenness of $f$ to obtain
\begin{align*}
	&\int_{J_u} f([u], \nu_u) \dd{\Hcal^{n-1}} =  \int_{\partial P^+\setminus N_\eta} f(-\xi, \nu_{P^+}) \dd{\Hcal^{n-1}} +  \int_{\partial P^-\setminus N_\eta} f(-\xi, \nu_{P^-}) \dd{\Hcal^{n-1}}\\ 
	&\qquad  +\int_{\partial P^+\setminus \partial P^- \cap N_\eta} f(-\xi-\lambda, -\eta) \dd{\Hcal^{n-1}} + \int_{\partial P^-\setminus \partial P^+ \cap N_\eta} f(\lambda-\xi, \eta) \dd{\Hcal^{n-1}}\\ 
	&\qquad +  \int_{N_\eta \setminus \partial P} f(\lambda, \eta) \dd{\Hcal^{n-1}}   + \int_{\partial P^+\cap \partial P^- \cap N_\eta} f(\lambda, \eta) \dd{\Hcal^{n-1}}\\ 
	&\geq \Hcal^{n-1} (\partial P^+\setminus N_\eta) f(-\xi, \eta) + \Hcal^{n-1} (\partial P^-\setminus N_\eta) f(\xi,\eta)\\
	&\qquad + \Hcal^{n-1}(\partial P^+\setminus \partial P^- \cap N_\eta) f(\xi + \lambda, \eta) +\Hcal^{n-1}(\partial P^-\setminus \partial P^+ \cap N_\eta) f(\lambda-\xi, \eta)\\ 
	&\qquad + \Hcal^{n-1} (N_\eta \setminus \partial P) f(\lambda, \eta)  + \Hcal(\partial P^+\cap\partial P^-\cap N_\eta) f(\lambda, \eta).
\end{align*}
Using the estimates
$$\Hcal^{n-1} (\partial P^+\setminus N_\eta)\geq \Hcal^{n-1}(\partial P^+\setminus \partial P^- \cap N_\eta)\quad\text{and}\quad f(\lambda, \eta) \leq f(\lambda \pm \xi,\eta) + f(\mp\xi, \eta),$$
which hold due to \eqref{GaussGreen} and Lemma \ref{prop:subadditivity} a), we find
\begin{align*}
	\int_{\Scal_u\cap Q_\eta} f([u], \nu_u) \dd{\Hcal^{n-1}} &\geq \Hcal^{n-1}(\partial P^+\setminus \partial P^- \cap N_\eta) \Big(f(\xi + \lambda, \eta) + f(-\xi, \eta)\Big) \\
	&\:+\Hcal^{n-1}(\partial P^-\setminus \partial P^+ \cap N_\eta)\Big(f(\lambda-\xi, \eta) + f(\xi,\eta)\Big)\\ 
	&\:+ \Hcal^{n-1} (N_\eta \setminus \partial P) f(\lambda, \eta)  + \Hcal(\partial P^+\cap\partial P^-\cap N_\eta) f(\lambda, \eta)	\geq f(\lambda,\eta),
\end{align*}
since
\begin{align*}
&\Hcal^{n-1}(N_\eta\setminus \partial P) + \Hcal^{n-1}(\partial P^+\setminus \partial P^-\cap N_\eta)+\Hcal^{n-1}(\partial P^-\setminus \partial P^+\cap N_\eta)  +  \Hcal(\partial P^+\cap\partial P^-\cap N_\eta)\\ & \qquad \qquad = \Hcal^{n-1}(N_\eta\setminus \partial P) + \Hcal^{n-1}(\partial P\setminus N_\eta) =\Hcal^{n-1}(N_\eta) =1.
\end{align*}

\end{proof}

As a consequence of~\cite[Lemma~6.2 and Lemma~6.3]{Sil17}, ${\rm BV}$-ellipticity can be characterized as follows:

\begin{theorem}[Characterization of $\rm \bm{BV}$-ellipticity]\label{theo:BV_elliptic}
Let $f\colon \R^m\times \Scal^{n-1}\to [0,\infty)$ be even and positively $1$-homogeneous in the first argument. 
Then the following statements are equivalent:
\begin{enumerate}
\item $f$ is ${\rm BV}$-elliptic with respect to $\Avs$;
\item $f$ is ${\rm BV}$-elliptic with respect to $\PCs$;
\item $f$ is ${\rm BV}$-elliptic with respect to $\Cas$;
\item $f$ is biconvex and $f(\lambda,\eta) = \Phi_f(\lambda\otimes \eta)$ for every $(\lambda,\eta)\in\R^{n}$, cf.~\eqref{Phi1}.
\end{enumerate}
\end{theorem}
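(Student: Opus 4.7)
The plan is to close the cycle of implications $(1)\Rightarrow(2)\Rightarrow(3)\Rightarrow(4)\Rightarrow(1)$. The first two implications come for free from the strict inclusions $\Ca\subset\PC\subset\Av$ recorded in \eqref{sets}, since BV-ellipticity with respect to a larger test class automatically specializes to BV-ellipticity with respect to any subclass.

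For $(4)\Rightarrow(1)$, I would take an arbitrary $\varphi\in\Av$ and set $u=u_{\lambda,\eta}+\varphi$. The generalized Gauss--Green formula for SBV maps on $Q_\eta$ gives
$$\int_{Q_\eta}\nabla u\dd x+\int_{J_u}[u]\otimes\nu_u\dd\Hcal^{n-1}=\int_{\partial Q_\eta}u_{\lambda,\eta}\otimes\nu_{Q_\eta}\dd\Hcal^{n-1}=\lambda\otimes\eta,$$
where the boundary trace reduces to $u_{\lambda,\eta}$ because $\varphi$ vanishes on $\partial Q_\eta$, and the final equality is an elementary face-by-face calculation in which the lateral face contributions cancel in pairs. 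Combining this with $\int_{Q_\eta}\nabla u\dd x=0$, which is built into the definition of $\Av$, I obtain $\int_{J_u}[u]\otimes\nu_u\dd\Hcal^{n-1}=\lambda\otimes\eta$. Writing biconvexity as $f(\lambda,\eta)=\Phi(\lambda\otimes\eta)$ with $\Phi\colon\R^{n\times n}\to[0,\infty)$ convex and positively $1$-homogeneous (hence the support function of a compact convex $K\subset\R^{n\times n}$), a support-function/Jensen interchange then yields
$$\Phi(\lambda\otimes\eta)=\sup_{A\in K}\int_{J_u}A:[u]\otimes\nu_u\dd\Hcal^{n-1}\leq\int_{J_u}\Phi([u]\otimes\nu_u)\dd\Hcal^{n-1}=\int_{J_u}f([u],\nu_u)\dd\Hcal^{n-1},$$
which is exactly \eqref{surface_qc}.

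The substantial step is $(3)\Rightarrow(4)$, where I would invoke the geometric constructions from \cite[Lemmas 6.2 and 6.3]{Sil17} combined with Lemma \ref{lem:Phi_f}. The strategy is to prove the Ambrosio--Braides-type inequality
$$f(\lambda,\eta)\leq\sum_{k=1}^m f(\lambda_k,\eta_k)\qquad\text{whenever}\qquad\sum_{k=1}^m\lambda_k\otimes\eta_k=\lambda\otimes\eta,$$
cf.\ \eqref{AmB90_inequality}, by producing, for every such decomposition, an explicit piecewise constant competitor $u\in u_{\lambda,\eta}+\Ca$ whose jump set consists, up to asymptotically negligible boundary transition layers, of flat slabs with normals $\eta_k$ and jump heights $\lambda_k$ distributed in the proportions dictated by the decomposition, so that $\int_{J_u}f([u],\nu_u)\dd\Hcal^{n-1}$ approaches $\sum_k f(\lambda_k,\eta_k)$. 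Inserting this competitor into the BV-ellipticity inequality and then taking the infimum over all decompositions gives $f(\lambda,\eta)\leq\Phi_f(\lambda\otimes\eta)$, while the reverse bound is trivial from $m=1$ in \eqref{Phi1}. Thus $f(\lambda,\eta)=\Phi_f(\lambda\otimes\eta)$, and Lemma \ref{lem:Phi_f} guarantees that $\Phi_f$ is convex and positively $1$-homogeneous, i.e., $f$ is biconvex with the asserted representation.

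The main obstacle is precisely the Caccioppoli construction underlying $(3)\Rightarrow(4)$: realizing an arbitrary tensor decomposition of $\lambda\otimes\eta$ via a piecewise constant function on $Q_\eta$ with the correct boundary datum requires a careful slab-by-slab stratification, and one must quantitatively control the boundary defects near $\partial Q_\eta$ so that their surface contribution vanishes under refinement. The continuity and linear-growth bound on $f$ recorded in Remark \ref{rem:sep_cvx} and \eqref{linear_bound} are essential for the limit passage. Since this is exactly what \cite[Lemmas 6.2 and 6.3]{Sil17} accomplish, I would cite those results rather than reproduce the geometric construction.
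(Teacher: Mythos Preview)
Your proposal is correct and follows essentially the same route as the paper: the trivial implications come from the inclusions \eqref{sets}, the step $(4)\Rightarrow(1)$ is handled by computing $\int_{J_u}[u]\otimes\nu_u\dd\Hcal^{n-1}=\lambda\otimes\eta$ via Gauss--Green/trace and then applying a Jensen-type argument (the paper uses Jensen directly with the positive $1$-homogeneity of $\Phi_f$, you use the equivalent support-function representation), and $(3)\Rightarrow(4)$ is reduced to the Ambrosio--Braides inequality via the explicit piecewise constant competitors of \cite[Lemmas~6.2 and~6.3]{Sil17}, after securing continuity and linear growth of $f$ from Remark~\ref{rem:sep_cvx}. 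The only nuance the paper makes explicit that you leave implicit is that \v{S}ilhav\'y's constructions in \cite[Section~6]{Sil17} use only competitors in $\Cas$ (not merely $\PCs$), which is exactly what is needed to feed assumption~(3) into the argument; your phrase ``an explicit piecewise constant competitor $u\in u_{\lambda,\eta}+\Ca$'' covers this, but it is worth stating.
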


\begin{proof}
The implications ``(1) $\Rightarrow$ (2)'', ``(2) $\Rightarrow$ (3)'' are trivial in light of \eqref{sets}. It remains to prove ``(3) $\Rightarrow$ (4)'' and ``(4) $\Rightarrow$ (1)''. Let $(\lambda,\eta)\in\R^n\times\Scal^{n-1}$ now be arbitrary.
\medskip

``(3) $\Rightarrow$ (4)''. As a consequence of Remark~\ref{rem:sep_cvx}, the function $f$ is separately convex, continuous and satisfies \eqref{linear_bound}. 
Then, the assumptions of Remark \ref{rem:Phi_f_representations} are satisfied in view of Lemma \ref{lem:subadditivity} and it holds that
\begin{align*}
	\Phi_f(\lambda\otimes \eta) &= \inf\bigg\{\int_{J_u} f([u], \nu_u)\, \dd\mathcal{H}^{n-1}:  u\in u_{\lambda, \eta}+ \PC\bigg\} \\
	&= \inf\bigg\{\int_{J_u} f([u], \nu_u)\, \dd\mathcal{H}^{n-1}:  u\in u_{\lambda, \eta}+ \Ca\bigg\};
\end{align*}
the latter equality follows from the proofs in~\cite[Section 6]{Sil17} where only test functions in $\Cas$ instead of $\PCs$ are employed.
Due to the ${\rm BV}$-ellipticty with respect to $\Cas$ of $f$, we then conclude that $\Phi_f(\lambda\otimes \eta)\geq f(\lambda, \eta)$. 
On the other hand, by choosing $u=u_{\lambda,\eta}$ as a test function, one immediately finds that $\Phi_f(\lambda\otimes \eta)\leq f(\lambda, \eta)$.
\medskip

``(4) $\Rightarrow$ (1)''. Let $\varphi\in \Av$ and $u=u_{\lambda, \eta}+\varphi$. 
Then, we infer by Jensen's inequality and by exploiting the positive $1$-homogeneity of $\Phi_f$ (see Lemma \ref{lem:Phi_f}) that
\begin{align*}
	\int_{J_u} f([u], \nu_u)\, \dd\mathcal{H}^{n-1}& = \int_{J_u} \Phi_f([u]\otimes \nu_u)\, \dd\mathcal{H}^{n-1} \geq \Phi_f\Bigl( \int_{J_u} [u] \otimes \nu_u\, \dd\mathcal{H}^{n-1}\Bigr) \\ & = \Phi_f(\lambda\otimes \eta) = f(\lambda, \eta).
\end{align*}
To see the equality before the last, we argue that
\begin{align*}
	\int_{J_u} [u]\otimes \nu_u\, \dd\mathcal{H}^{n-1}& = \int_{N_\eta}(\lambda +[\varphi])\otimes \eta\, \dd\mathcal{H}^{n-1} + \int_{J_u\setminus N_\eta} [\varphi]\otimes \nu_\varphi\, \dd\mathcal{H}^{n-1} \\ & =\lambda\otimes \eta + \int_{J_\varphi} [\varphi] \otimes \nu_\varphi\, \dd\mathcal{H}^{n-1} = \lambda\otimes \eta ,
 \end{align*}
 due to
\begin{align*}
	\int_{J_\varphi} [\varphi]\otimes \nu_\varphi \dd \Hcal^{n-1}  = D\varphi(Q_\eta) - \int_{Q_\eta} \nabla \varphi \dd{x} =  \int_{\partial Q_\eta} \varphi\otimes \nu_{Q_\eta} \dd{\Hcal^{n-1}} =0,
\end{align*}
where we have exploited that the mean value of $\nabla \varphi$ on $Q_\eta$ is zero and that $\varphi$ has zero boundary conditions on $\partial Q_\eta$. Moreover, we used the fine properties of $\rm SBV$ functions (see \cite{AFP00}) and  
\begin{align*}
	D\varphi(Q_\eta) = \int_{\partial Q_\eta} \varphi\otimes \nu_{Q_\eta} \dd{\Hcal^{n-1}},
\end{align*}
which follows from the trace theorem for ${\rm BV}$ functions, see e.g.~\cite[Section~5.3, Theorem~1]{EvG92}.
\end{proof}

\begin{remark}
It is known (see \cite[Proposition~4]{KrR16}) that $\rm SBV$-functions cannot generally be approximated by piecewise constant ones. Proposition~\ref{prop:trivial} and Theorem~\ref{theo:BV_elliptic} provide an additional confirmation of this fact: if such an approximation existed, then $\SBVs$-elliptic functions could be approximated by $\Cas$-elliptic ones, but this cannot be the case since the former class only contains the zero function.
\end{remark}

To close this section, we briefly discuss two closely related topics.
We first cover a relaxation result in the ${\rm BV}$-setting. Here, Theorem \ref{theo:BV_elliptic} is the key to characterizing ${\rm BV}$-elliptic envelopes of even functions $f\colon \R^n\times\Scal^{n-1}\to[0,\infty)$.
Two versions of such envelopes could be defined as follows:
\begin{align*}
	f^\BV(\lambda,\eta)& \coloneqq \sup\left\{h(\lambda,\eta) : h\text{ is ${\rm BV}$-elliptic and } h\leq f\right\},\\
	f_\BV(\lambda,\eta)& \coloneqq \inf\left\{\int_{J_u} f([u],\nu_u)\dd\Hcal^{n-1}: u \in u_{\lambda,\eta} + \Ca\right\}.
\end{align*}
In the following, we prove that they both suitably coincide with $\Phi_f$ under the assumptions in Theorem \ref{theo:BV_elliptic}.

\begin{proposition}[$\rm \bm{BV}$-elliptic envelope]\label{prop:BV_envelope}
	Let $f\colon \R^n\times \Scal^{n-1}\to [0,\infty)$ be even and positively $1$-homogeneous in the first variable.
	Then, it holds that
	$$f^\BV(\lambda,\eta) = f_\BV(\lambda,\eta) = \Phi_f(\lambda\otimes\eta),$$
	for every $(\lambda,\eta)\in\R^n\times\Scal^{n-1}$.
	In particular, the ${\rm BV}$-elliptic envelope of $f$ is ${\rm BV}$-elliptic, and $f$ is ${\rm BV}$-elliptic if and only if it coincides with its ${\rm BV}$-elliptic envelope.
\end{proposition}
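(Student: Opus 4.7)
I aim to establish the chain
\[
  \Phi_f(\lambda\otimes\eta)\;\le\;f^{\BV}(\lambda,\eta)\;\le\;f_{\BV}(\lambda,\eta)\;\le\;\Phi_f(\lambda\otimes\eta),
\]
from which the claimed three-way equality, and the two corollaries stated after it, follow at once. Throughout, set $g(\lambda,\eta)\coloneqq\Phi_f(\lambda\otimes\eta)$.

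The first inequality $g\le f^{\BV}$ will be obtained by exhibiting $g$ as an admissible competitor in the supremum defining $f^{\BV}$. By Lemma~\ref{lem:Phi_f}, $\Phi_f$ is convex and positively $1$-homogeneous on $\R^{n\times n}$, so $g$ is biconvex in the sense of Definition~\ref{def:biconvexity}; moreover, it is even and positively $1$-homogeneous in its first variable, and Theorem~\ref{theo:BV_elliptic} (implication (4)$\Rightarrow$(1)) then ensures $g$ is $\BV$-elliptic. The pointwise estimate $g\le f$ is immediate upon choosing $m=1$ in the definition~\eqref{Phi1}.

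The middle step $f^{\BV}\le f_{\BV}$ is routine: for any $\BV$-elliptic $h\le f$ and any competitor $u\in u_{\lambda,\eta}+\Ca$, the $\BV$-ellipticity of $h$ combined with $h\le f$ yields
$h(\lambda,\eta)\le\int_{J_u}h([u],\nu_u)\dd\Hcal^{n-1}\le\int_{J_u}f([u],\nu_u)\dd\Hcal^{n-1}$,
and the claim follows upon passing to the infimum in $u$ and then to the supremum in $h$.

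The upper bound $f_{\BV}\le g$ is the heart of the argument, and what I expect to be the main obstacle. For each $\varepsilon>0$, the infimum definition~\eqref{Phi1} of $\Phi_f$ provides a decomposition $\lambda\otimes\eta=\sum_{i=1}^m\lambda_i\otimes\eta_i$ with $\sum_{i=1}^m f(\lambda_i,\eta_i)\le g(\lambda,\eta)+\varepsilon$. The plan is then to realise this decomposition geometrically as a piecewise constant competitor $u\in u_{\lambda,\eta}+\Ca$ whose jump set consists of finitely many planar pieces orthogonal to the $\eta_i$'s and carrying the jumps $\lambda_i$, arranged so that $\int_{J_u}f([u],\nu_u)\dd\Hcal^{n-1}\le\sum_{i=1}^m f(\lambda_i,\eta_i)+\varepsilon$; this is precisely the laminate-type construction underpinning \cite[Theorem~2.3]{Sil17}, which relies only on the positive $1$-homogeneity of $f$ in its first variable (the subadditivity and continuity hypotheses in Remark~\ref{rem:Phi_f_representations} serve only to produce the alternative variational representations of $\Phi_f$ listed there, not the laminate construction itself). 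Letting $\varepsilon\to 0$ closes the chain. The final two assertions of the proposition are then immediate: $f^{\BV}=g$ is $\BV$-elliptic by the first step, and $f$ is $\BV$-elliptic if and only if $f=f^{\BV}=\Phi_f\circ\otimes$, which is the content of Theorem~\ref{theo:BV_elliptic}.
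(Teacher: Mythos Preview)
Your proof is correct and follows essentially the same route as the paper: both establish the chain $\Phi_f(\lambda\otimes\eta)\le f^{\BV}\le f_{\BV}\le\Phi_f(\lambda\otimes\eta)$ by (i) invoking Theorem~\ref{theo:BV_elliptic} to see that $(\lambda,\eta)\mapsto\Phi_f(\lambda\otimes\eta)$ is $\BV$-elliptic and below $f$, (ii) the elementary comparison $f^{\BV}\le f_{\BV}$, and (iii) the laminate construction from \cite[Lemma~6.2]{Sil17}. One minor caution: your parenthetical claim that subadditivity is not needed for the \v{S}ilhav\'y construction is a bit optimistic---in the analogous construction reproduced in Lemma~\ref{lem:silhavy_construction} (see \eqref{jump_L_k}) subadditivity is invoked to handle potential overlaps among the parallel jump hyperplanes, so you should not disclaim it without a separate argument; the paper simply cites \cite[Lemma~6.2]{Sil17} without comment.
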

\begin{proof}
	The function 
	$$\tilde{f}\colon \R^n\times\Scal^{n-1}\to[0,\infty),\quad (\lambda,\eta)\mapsto\Phi_f(\lambda\otimes\eta)$$
	is ${\rm BV}$-elliptic due to Theorem \ref{theo:BV_elliptic}. Hence, it follows
	$$\Phi_f(\lambda\otimes\eta)=\tilde{f}(\lambda,\eta)\leq f^\BV(\lambda,\eta)$$
	for every $(\lambda,\eta)\in\R^n\times\Scal^{n-1}$.
	The inequality $f_\BV\leq \tilde{f}$ has been shown in \cite[Lemma 6.2]{Sil17}, so it remains to show that $f^\BV\leq f_\BV$.
	To this end let $u\in u_{\lambda,\eta}+\Ca$ be any test function and let $\eps>0$. 
	By definition of $f^\BV$, we find a ${\rm BV}$-elliptic function $h$ with $h\leq f$ such that
	$$f^\BV(\lambda,\eta) \leq h(\lambda,\eta) + \eps \leq \int_{J_u}h([u],\nu_u)\dd\Hcal^{n-1} + \eps\leq \int_{J_u}f([u],\nu_u)\dd\Hcal^{n-1} + \eps.$$
	Since $u$ and $\eps$ are arbitrary, we conclude the remaining inequality.
\end{proof}

Lastly, we mention another closely related notion of convexity.
\begin{remark}[Joint convexity]\label{rem:joint_convexity}
	We say that an even function $f\colon \R^n\times\Scal^{n-1}\to[0,\infty)$ is \emph{jointly convex} if there exist Lipschitz continuous functions $g_i\in\C^1( \R^n;\R^n)$ for every $i\in\N$ such that
	\begin{align*}
		f(\lambda,\eta) = \sup_{i\in\N}\big(g_i(\lambda) - g_i(0)\big)\cdot \eta\quad\text{ for every } (\lambda,\eta)\in\R^n\times\Scal^{n-1}.
	\end{align*}
	
	This definition differs slightly from the literature \cite[Definition 5.17]{AFP00} in the sense that we do not require the functions $g_i$ to be defined on compact sets. We also do not necessitate uniform continuity and boundedness when extending the functions in \cite[Definition 5.17]{AFP00} to all of $\R^n$.
	\medskip
	
	It turns out that this convexity notion is also equivalent to ${\rm BV}$-ellipticity if $f$ is positively $1$-homogeneous in the first variable.
	Indeed, any jointly convex function is ${\rm BV}$-elliptic and the proof can be handled exactly as in \cite[Theorem 5.20]{AFP00}.
	
	On the other hand, any biconvex function is jointly convex since any convex function $\Phi\colon\R^{n\times n}\to[0,\infty)$ with $\Phi(0)=0$ can be approximated from below by linear functions $g_i \colon\R^{n\times n}\to\R$, which can be expressed as
	$g_i(F) = A_i : F$ for some $A_i\in\R^{n\times n}$ and every $F\in\R^{n\times n}$.
\end{remark}

\section{$\rm BD$-ellipticity and related notions}\label{sec:BD}
\subsection{Basic definitions}
First, we introduce the primary set of test functions relevant for ${\rm BD}$-ellipticity.
For $\eta\in\Scal^{n-1}$, we define the set of \emph{piecewise rigid} functions with compact support in $Q_\eta$ as
\begin{align*}
	\begin{split}
		\PR = \bigg\{&\varphi \in {\rm SBV}(Q_\eta;\R^n): \varphi(x) = \sum_{k\in\N} (A_kx +b_k)\one_{P_k}(x)\ \text{for } x\in Q_\eta, \\
		&A_k\in \R^{n\times n}_{\skw},\, b_k\in\R^n,\,\text{$(P_k)_k$ a Caccioppoli partition of $Q_\eta$, and $\supp \varphi\Subset Q_\eta$}\bigg\},
	\end{split}
\end{align*}
cf.~\cite[Section 2.1]{FPS21}, and give the following definition of ${\rm BD}$-ellipticity:

\begin{definition}[$\rm \bm{BD}$-ellipticity]
	We call an even function $f\colon\R^n\times \Scal^{n-1}\to [0,\infty)$ ${\rm BD}$-elliptic if
	\begin{align*}
		f(\lambda,\eta) \leq \int_{J_u} f\big([u],\nu_u\big) \dd \Hcal^{n-1}
	\end{align*}
	for every $u\in u_{\lambda,\eta} + \PR$.
\end{definition}

Several examples of ${\rm BD}$-elliptic functions can be found in Section \cite[Section 4]{FPS21}.

The next definition is the ${\rm BD}$-analogue of biconvexity, taken from \cite[Definition 4.8]{FPS21}, and tailored to our setting.

\begin{definition}[Symmetric biconvexity]\label{def:symmetric_biconvex}
	An even function $f\colon\R^n\times \Scal^{n-1}\to [0,\infty)$ is said to be symmetric biconvex, if there exists a convex and positively $1$-homogeneous function $\Psi:\R^{n\times n}_{\sym}\to [0,\infty)$ such that
	$$f(\lambda,\eta) = \Psi(\lambda\odot\eta)\quad\text{ for all } (\lambda,\eta)\in\R^n\times \Scal^{n-1}.$$
\end{definition}

Before we prove our second main theorem, we briefly summarize a few properties of ${\rm BD}$-elliptic functions as a direct consequence of their ${\rm BV}$-ellipticity.
\begin{remark}
	Let $f\colon\R^n\times \Scal^{n-1}\to [0,\infty)$ be even, positively $1$-homogeneous and ${\rm BD}$-elliptic. 
	In view of $\Ca\subset\PR$ and the chain of inclusions in \eqref{sets}, it follows from Proposition \ref{prop:subadditivity} and Remark \ref{rem:sep_cvx} that $\bar{f}$ as in \eqref{1hom_extension} is separately convex, continuous and satisfies \eqref{linear_bound}.
\end{remark}

\subsection{Characterization result}

First, we prove that ${\rm BD}$-elliptic functions that are positively $1$-homogeneous in the first variable are symmetric in the sense that the two variables can be switched.

\begin{proposition}[Symmetry of $\rm \bm{BD}$-elliptic functions]\label{prop:switch}
	If $f\colon\R^n\times \Scal^{n-1}\to [0,\infty)$ is even, positively $1$-homogeneous in the first variable, and ${\rm BD}$-elliptic, then
	\begin{align}\label{switch}
		\bar{f}(\lambda,\eta) = \bar{f}(\eta,\lambda) \quad\text{for all }(\lambda,\eta)\in\R^n\times \R^n,
	\end{align}
	with $\bar{f}$ as in \eqref{1hom_extension}.
\end{proposition}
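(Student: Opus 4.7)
By the positive $1$-homogeneity of $\bar f$ in each of its arguments---which is an immediate consequence of \eqref{1hom_extension} together with the positive $1$-homogeneity of $f$ in its first argument---it suffices to prove \eqref{switch} for $\lambda,\eta\in\Scal^{n-1}$, whereupon the identity reduces to $f(\lambda,\eta)=f(\eta,\lambda)$. If $\lambda=\pm\eta$ the equality follows from the evenness of $f$, so assume from now on that $\lambda$ and $\eta$ are linearly independent.

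The strategy is to construct, for each $k\in\N$, a piecewise rigid test function $u_k=u_{\lambda,\eta}+\varphi_k$ with $\varphi_k\in\PR$ such that
\[
\int_{J_{u_k}}f\bigl([u_k],\nu_{u_k}\bigr)\dd\Hcal^{n-1}\xrightarrow{k\to\infty}f(\eta,\lambda).
\]
Feeding $u_k$ into the ${\rm BD}$-ellipticity inequality and passing to the limit will then yield $f(\lambda,\eta)\leq f(\eta,\lambda)$; interchanging the roles of $\lambda$ and $\eta$ in the construction gives the reverse inequality, hence the desired equality.

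The construction of $u_k$ is inspired by \cite[Section~6]{Sil17} but crucially exploits the additional flexibility of $\PR$ afforded by skew-symmetric gradients. Restricting attention to the plane spanned by $\lambda$ and $\eta$---with the remaining $n-2$ transverse directions handled by a thin cylindrical extension in analogy with the proof of Proposition~\ref{prop:subadditivity}$\,ii)$---one works inside a thin slab $S_h=\{x\in Q_\eta:|x\cdot\eta|<h\}$ surrounding the original jump hyperplane, partitioning $S_h$ into $N$ thin sub-regions separated by hyperplanes orthogonal to $\lambda$. On each sub-region, $\varphi_k$ is set to a rigid motion whose skew-symmetric linear part is tuned so that the rigid motion interpolates between the exterior values of $u_{\lambda,\eta}$ on the top and bottom faces of $S_h$; within each sub-region, $\varphi_k$ also jumps by $-\lambda$ across $\{x\cdot\eta=0\}$, so that $u_k$ is continuous across the original jump plane inside the slab. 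The affine constants are chosen so that, across each of the $N-1$ interior interfaces, $u_k$ jumps by a small multiple of $\eta$ with normal $\lambda$. The Gauss--Green identity, combined with the skew-symmetry of $\nabla\varphi_k$ on each piece of the partition, ensures the conservation
\[
\int_{J_{u_k}}[u_k]\odot\nu_{u_k}\dd\Hcal^{n-1}=\lambda\odot\eta,
\]
which, together with the positive $1$-homogeneity of $f$ in its first argument, forces the accumulated interior contribution to approach $f(\eta,\lambda)$ as $N\to\infty$.

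The hard part will be controlling the boundary energy along $\partial S_h$. On the top and bottom faces of $S_h$, the residual mismatch between the rigid motion and the exterior value of $u_{\lambda,\eta}$ produces jumps of magnitude $O(1/(Nh))$ on a set of $\Hcal^{n-1}$-length $O(1)$, contributing $O(1/(Nh))$ to the energy; on the lateral faces of $S_h$ (of total length $O(h)$), the contribution is $O(h)$. Passing to a diagonal sequence $(N_k,h_k)$ with $N_k h_k\to\infty$ and $h_k\to 0$ eliminates both error terms in the limit, leaving only the desired value $f(\eta,\lambda)$.
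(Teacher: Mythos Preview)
Your overall strategy---constructing piecewise rigid competitors whose interior interfaces carry jumps of height proportional to $\eta$ with normal $\lambda$---is the right idea and matches the spirit of the paper's argument. However, the specific slab construction you propose has a genuine obstruction when $\lambda\cdot\eta\neq 0$, and your claimed error bound $O(1/(Nh))$ on the top and bottom faces of $S_h$ is incorrect in that case.

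The problem is this: on each sub-region you take $u_k=Ax+b_j$ with $A\in\R^{n\times n}_{\skw}$ (the same $A$ on all sub-regions, since otherwise the interior jumps would not be constant multiples of $\eta$). For $u_k$ to match $0$ at $x\cdot\eta=-h$ and $\lambda$ at $x\cdot\eta=h$, the increment $A(2h\eta)=2hA\eta$ must be close to $\lambda$. But skew-symmetry forces $A\eta\perp\eta$, so $2hA\eta$ has \emph{no} $\eta$-component. Hence the $\eta$-component of the mismatch on the top and bottom faces is exactly $(\lambda\cdot\eta)\eta$, which is of order one, independent of $N$ and $h$. The resulting boundary energy contributes a term of size roughly $|\lambda\cdot\eta|\,f(\eta,\eta)$ that does not vanish in the limit, and your argument only yields
\[
f(\lambda,\eta)\;\leq\; f(\eta,\lambda)\;+\;|\lambda\cdot\eta|\,f(\eta,\eta),
\]
which is not enough. (When $\lambda\perp\eta$ your construction does work, but that case alone is insufficient.)

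The paper resolves this with a different geometry. Instead of a slab, it places a thin triangle $\Delta_k$ with its base on the midplane $\{x\cdot\eta=0\}$, a short side of length $O(1/k^2)$ in a direction $w$ chosen so that $A_kw=k^2\lambda$, and a hypotenuse connecting them; the skew matrix has norm $k^2$ and the constants $\tfrac{j}{k}e_2$ are tuned so that the jumps along all three sides of $\Delta_k$ have energy $O(1/k^2)$, while the $k^2-1$ interior interfaces (with normal $\lambda$ and jump $\tfrac{1}{k}\eta$) contribute $\tfrac{1}{k}f(\eta,\lambda)$ per triangle. Tiling the midplane with $k-2$ copies of $\Delta_k$ then gives total energy $f(\eta,\lambda)+O(1/k)$. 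The triangular shape is precisely what allows the skew rotation to absorb the non-orthogonal part of $\lambda$: the ``top'' boundary is no longer a hyperplane with normal $\eta$, so the constraint $A\eta\perp\eta$ is no longer an obstruction.
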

\begin{proof}
	Since $\bar{f}$ is positively $1$-homogeneous in both variables, it suffices to establish \eqref{switch} for unit vectors.
	For $n=1$, the statement is clear, which is why first consider the case $n=2$. We detail the generalization to higher dimensions in Step 3.
	
	Without loss of generality, we may assume that $\lambda\in \Scal^{n-1}$ is arbitrary and $\eta = e_2$. The goal is then to show $$f(\lambda,e_2) \leq f(e_2,\lambda).$$
	Our proof strategy relies on the construction of suitable test functions that generate many small jumps by multiples of $e_2$ in the direction $\lambda$, and at the same time compensate the elementary jump $u_{\lambda,e_2}$.
	To establish the above inequality for any given $\eta \in \Scal^{n-1}$, we simply rotate the construction.	
	The desired equality \eqref{switch} can be obtained by exchanging the roles of $\lambda$ and $\eta$.
	If $\lambda=\pm e_2$, there is nothing to prove, which is why we exclude this case in the following calculations.
	\medskip
	
	\textit{Step 1: Construction on a single triangle ($n=2$).}
	Let $k\in\N$ with $k>2$. In this step, we consider the unique matrix $A_k\in\R^{2\times 2}_{\skw}$ with the property that
	\begin{align}\label{Ak}
		A_ke_1 = -k^2e_2.
	\end{align}
	We define a small triangle $\Delta_k$ with vertices $0$, $\frac{1}{k}e_1$ and $\xi_k \coloneqq \frac{1}{k}e_1 + \frac{2}{k^2}w$, where $w\in\Scal^1$ is the vector with 
	\begin{align}\label{Akw}
		\frac{1}{k^2}A_kw=\lambda.
	\end{align}
	For now, we assume that $w\cdot e_2 >0$; the case $w\cdot e_2<0$ is detailed at the end of this step.
	
	We intersected the triangle by $k^2-1$ parallel lines with normal $\lambda$ in such a way that their intersection points with $[0,\frac{1}{k})\times \{0\}$ are equidistant with distance $\frac{1}{k^3}$. The resulting geometric figures inside $\Delta_k$ (a single triangle and $k^2-1$ trapezoids) are denoted by $M_k^1,\ldots,M_k^{k^2}$, counting from left to right.
	For an illustration of the geometric setup, see Figure \ref{fig:triangle0}.
	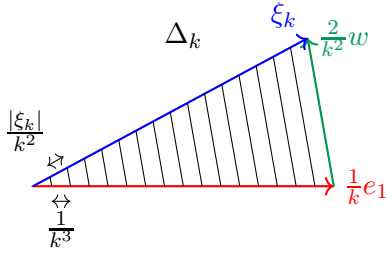
\begin{figure}[h!]
		\begin{tikzpicture}[scale=8]
			\draw (0,0) -- (1/2,0) --++ (100:1/4) -- cycle;
			\draw (1/4,1/4) node {$\Delta_k$};
			\draw [red,thick,->] (0,0) -- (1/2,0) node [right] {$\frac{1}{k}e_1$};
			\draw [ForestGreen,thick,->] (1/2,0) --++ (100:1/4) node [right] {$\frac{2}{k^2}w$};
			\draw [blue,thick,->] (0,0) -- ($(1/2,0) + (100:1/4)$) node [above left] {$\xi_k$};
			\draw [<->] (1/32,-.025) --++ (1/64,0) node [below] {$\frac{1}{k^3}$} --++ (1/64,0);
			\draw [<->] ($(0,0.02)+(30:1/36)$) --++ (30:1/64) node [above left] {$\frac{|\xi_k|}{k^2}$} --++ (30:1/64);
			\begin{scope}
				\clip (0,0) -- (1/2,0) --++ (100:1/4) -- cycle;
				\foreach \j in {0,...,15}{
					\draw (\j/32,0) --++ (100:1/4);
				}
			\end{scope}
		\end{tikzpicture}
		\caption{An illustration of the triangle $\Delta_k$, including $k^2-1$ parallel lines with normal $\lambda$ that intersect the bottom line equidistantly. All other lengths are uniquely determined as a consequence of the intercept theorem.}\label{fig:triangle0}
	\end{figure}		
	
	We now set
	\begin{align*}
		\bar{u}_k(x)=
		\begin{cases}
			A_kx + \frac{j}{k}e_2 &\text{ if } x\in M_k^j \text{ for some } j\in\{1,\ldots,k^2\},\\
			u_{\lambda,e_2}(x) &\text { if } x\notin \Delta_k,
		\end{cases}
	\end{align*}
	for $x\in Q_{e_2}$. 
	The task is now to carefully estimate the jumps of $u_k$ on $\partial \Delta_k$ as well as those within $\Delta_k$.
	Along the bottom edge $D_k^1$ of $\Delta_k$ in direction $e_1$, we compute with \eqref{Ak} that
	\begin{align}\label{D1}
		\int_{D_k^1} \! f([\bar{u}_k],\nu_{\bar{u}_k}) \dd \Hcal^{n-1} &= \sum_{j=1}^{k^2} \int_{\frac{j-1}{k^3}}^{\frac{j}{k^3}} f\big((-k^2 x_1 +\tfrac{j}{k})e_2,e_2\big)\dd x_1 =\sum_{j=1}^{k^2} \int_{\frac{j-1}{k^3}}^{\frac{j}{k^3}} \big(-k^2 x_1 +\tfrac{j}{k}\big)\,\dd x_1f(e_2,e_2) \nonumber\\
		&=  \sum_{j=1}^{k^2} \frac{1}{2k^4} f(e_2,e_2) = \frac{1}{2k^2}f(e_2,e_2) \leq \frac{1}{k^2} \max_{\Scal^{n-1}\times\Scal^{n-1}} f;
	\end{align}
	note that $-k^2 x_1 +\tfrac{j}{k}> 0$ on $(\frac{j-1}{k^3},\frac{j}{k^3})$, which allows us to exploit the positive $1$-homogeneity of $f$ in the first variable.
	With similar arguments, we estimate, with the help of \eqref{Ak} and \eqref{Akw}, along the top edge $D_k^2$ of $\Delta_k$, in direction $\bar{\xi}_k \coloneqq \frac{1}{|\xi_k|}\xi_k$ (and outer normal $\zeta_k$)
	\begin{align}\label{D2}
		\int_{D_k^2} f([\bar{u}_k],\nu_{\bar{u}_k}) \dd \Hcal^{n-1} &= \sum_{j=1}^{k^2} \int_{\frac{(j-1)|\xi_k|}{k^2}}^{\frac{j|\xi_k|}{k^2}} f\big(\lambda - k^2tA_k\bar{\xi}_k -\tfrac{j}{k}e_2,\zeta_k\big)\dd t \nonumber\\
		&= \sum_{j=1}^{k^2} \int_{\frac{(j-1)|\xi_k|}{k^2}}^{\frac{j|\xi_k|}{k^2}} f\big(\lambda + \tfrac{k}{|\xi_k|}te_2 - \tfrac{2}{|\xi_k|}t\lambda  -\tfrac{j}{k}e_2,\zeta_k\big)\dd t \nonumber\\
		&\leq \sum_{j=1}^{k^2} \int_{\frac{(j-1)|\xi_k|}{k^2}}^{\frac{j|\xi_k|}{k^2}} \Big[\big(1-\tfrac{2}{|\xi_k|}t\big)f(\lambda,\zeta_k) - \big(\tfrac{k}{|\xi_k|}t-\tfrac{j}{k}\big)f(-e_2,\zeta_k)\Big]\dd t \nonumber\\
		&= \frac{|\xi_k|}{2k}f(-e_2,\zeta_k) \leq \frac{|\xi_k|}{2k}\max_{\Scal^{n-1}\times\Scal^{n-1}} f \leq \frac{1}{k^2}\max_{\Scal^{n-1}\times\Scal^{n-1}} f.
	\end{align}
	On the shortest edge $D_k^3$, in direction $w\in\Scal^{n-1}$, we observe the total jump energy
	\begin{align}\label{D3}
		\int_{D_k^3} f([\bar{u}_k],\nu_{\bar{u}_k}) \dd \Hcal^{n-1} &= \int_0^\frac{2}{k^2} f\big(\lambda + k e_2 - k^2t\lambda - ke_2,\lambda\big)\dd t \nonumber\\
		&= f(\lambda,\lambda)\int_0^\frac{1}{k^2} (1-k^2t)\,\dd t + f(-\lambda,\lambda)\int_{\frac{1}{k^2}}^{\frac{2}{k^2}} (k^2t-1)\,\dd t \nonumber\\
		&= \frac{1}{2k^2}\left(f(\lambda,\lambda) + f(-\lambda,\lambda)\right) \leq \frac{1}{k^2}\max_{\Scal^{n-1}\times\Scal^{n-1}} f,
	\end{align}
	due to \eqref{Akw}.
	The parallel lines inside the triangle $\Delta_k$, which we denote by $L_k^j$ and have length $\frac{2j}{k^4}$ for $j=1,\ldots,k^2-1$, yield the jumps
	\begin{align}\label{inside}
		\sum_{j=1}^{k^2-1} \int_{L_k^j} f([\bar{u}_k],\nu_{\bar{u}_k}) \dd \Hcal^{n-1} &= \sum_{j=1}^{k^2-1} \frac{j}{k^4}f\big(\tfrac{1}{k}e_2,\lambda)\nonumber \\
		&= \frac{2}{k^5}\sum_{j=1}^{k^2-1} j f(e_2,\lambda) = \frac{k^4 - k^2}{k^5}f(e_2,\lambda) \leq \frac{1}{k}f(e_2,\lambda).
	\end{align}
	\smallskip
	
	If $w \cdot e_2 <0$, then we redefine $\Delta_k$ as having the vertices $0$, $-\frac{1}{k}e_1$, and $-\frac{1}{k}e_1 - \frac{2}{k^2}w$; the parallel lines $L_k^j$ for $j=1,\ldots,k^2-1$ inside $\Delta_k$ are drawn analogously. The function $\bar{u}_k$ is now set to be
	\begin{align*}
		\bar{u}_k(x)=
		\begin{cases}
			-A_kx - \frac{j}{k}e_2 &\text{ if } x\in M_k^j \text{ for some } j\in\{1,\ldots,k^2\},\\
			u_{\lambda,e_2}(x) &\text { if } x\notin \Delta_k,
		\end{cases}
	\end{align*}
	for $x\in Q_{e_2}$. Using the evenness of $f$, it is evident that the previous estimates still hold.
	
	\medskip
	
	\textit{Step 2: Extending the construction to multiple triangles ($n=2$).}
	We now choose $k=2N$ for $N\in \N$ and place $k-2 = 2N-2$ copies of $\Delta_k$ next to each other, i.e., we set $\Delta_k^i \coloneqq \Delta_k + \frac{i}{2N}e_1$ for $i\in\{1-N,\ldots,N-2\}$, cf.~Figure \ref{fig:copies}.
	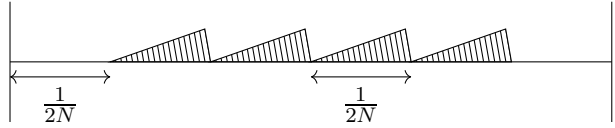
\begin{figure}[h!]
		\centering
		\begin{tikzpicture}[scale=8]
			\def\k{3}		
			\draw (-1/2,0) -- (1/2,0);
			\draw (-1/2,-.1) --++ (0,.2);
			\draw (1/2,-.1) --++ (0,.2);
			\draw[<->] (-1/2,-.025) --++ ({1/(4*\k)},0) node[below] {$\frac{1}{2N}$} --++ ({1/(4*\k)},0);
			\draw[<->] (0,-.025) --++ ({1/(4*\k)},0) node[below] {$\frac{1}{2N}$} --++ ({1/(4*\k)},0);
			\foreach \i in {-2,...,1}{
			\draw ({\i/(2*\k)},0) -- ({(\i+1)/(2*\k)},0) -- ($({(\i+1)/(2*\k)},0) + (100:{1/(2*\k*\k)})$)  -- cycle;
			\begin{scope}
				\clip ({\i/(2*\k)},0) -- ({(\i+1)/(2*\k)},0) -- ($({(\i+1)/(2*\k)},0) + (100:{1/(2*\k*\k)})$) -- cycle;
				\foreach \j in {0,...,36}{
					\draw ($({\i/(2*\k)},0)+ ({\j/(4*\k*\k*\k)},0)$) --++ (100:{1/(2*\k*\k)});
				}
			\end{scope}	
			}
		\end{tikzpicture}
		\caption{An illustration of the translated copies $\Delta_k^i$ of $\Delta_k$.}\label{fig:copies}
	\end{figure}
	Moreover, we define
	\begin{align*}
		u_k(x) =
		\begin{cases}
			\bar{u}_k(x-\frac{i}{2N}e_1) & \text{ if } x\in \Delta_k^i \text{ for some }i\in\{1-N,\ldots,N-2\},\\
			u_{\lambda,e_2}(x) &\text{ otherwise,}
		\end{cases}
	\end{align*}
	for $x\in Q_{e_2}$.
	In light of \eqref{D1} - \eqref{inside} and Figure \ref{fig:copies}, we then obtain
	\begin{align*}
		\int_{J_{u_k}} f\big([u_k],&\nu_{u_k}\big) \dd \Hcal^{n-1} = \\
		&=\frac{2}{k}f(\lambda,e_2)  + (k-2)\left(\int_{\partial \Delta_k} f([\bar{u}_k],\nu_{\bar{u}_k}) \dd \Hcal^{n-1} + \sum_{j=1}^{k^2-1} \int_{L_k^j} f([\bar{u}_k],\nu_{\bar{u}_k}) \dd \Hcal^{n-1}\right)\\
		&\leq \frac{2}{k}f(\lambda,e_2) + (k-2)\left(\frac{3}{k^2}\max_{\Scal^{n-1}\times\Scal^{n-1}} f + \frac{1}{k}f(e_2,\lambda) \right).
	\end{align*}
	For $k\to \infty$, this estimate and the ${\rm BD}$-ellipticity of $f$ yield
	$$f(\lambda,e_2) \leq \limsup_{k\to\infty}\int_{J_{u_k}} f\big([u_k],\nu_{u_k}\big) \dd \Hcal^{n-1} \leq f(e_2,\lambda).$$	
	
	\medskip
	
	\textit{Step 3: Generalization to higher dimensions ($n\geq 3$).}
	Let $n\in\N$ with $n\geq 3$. We aim to prove again that $f(\lambda,e_2) \leq f(e_2,\lambda)$ for any $\lambda\in \Scal^{n-1}$.
	To this end, let us consider the two-dimensional plane $H$ spanned by $\lambda$ and $e_2$, choose a vector $v\in \Scal^{n-1}\cap N_{e_2}\cap H$ with $N_{e_2}=\{x\in Q_{e_2}: x\cdot e_2 = 0\}$, and select the unique matrix $A_k\in \R^{n\times n}_{\skw}$ with
	\begin{align}\label{Ak2}
		A_kv = -k^2e_2\quad\text{and}\quad A_k z=0\text{ for every } z\in H^\perp,
	\end{align}
	where $H^\perp$ is the orthogonal complement of $H$. This matrix describes a rotation by $\frac{-\pi}{2}$ and scaling by $k^2$ in the plane $H$.
	If $\lambda$ lies in the $e_1$-$e_2$-plane then $A_k$ can be chosen as in the Steps 1 and 2 and filled up with zeroes in the remaining components.
	Any other case is handled by the fact that $U^TAU\in \R^{n\times n}_{\skw}$ for every $A\in\R^{n\times n}_{\skw}$ and every orthogonal matrix $U\in O(2)$.
	As before, we select $w\in\Scal^{n-1}$ such that \eqref{Akw} is satisfied and assume that $w\cdot e_2>0$; the case $w\cdot e_2<0$ can be addressed analogously.
	
	In the $\lambda$-$e_2$-plane, we can set up a triangle $\Delta_k$ as in Step 1, where $e_1$ is replaced by $v$.
	We introduce parallel right prisms $\Delta_{k,n}^i$, each base of the shape $\Delta_k$, where $i$ is taken from an index set $I$ of consecutive integers with cardinality $k$. 
	These prisms are constructed in such a way that $\dist(\Delta_{k,n}^i,\partial Q_{e_2})>\frac{1}{k}$ and their union covers the hyperplane $N_{e_2}$ up to an error in $\Hcal^{n-1}$-measure of order $\frac{1}{k}$.
	Moreover, we intersect each prism by $k^2-1$ hyperplanes with normal $\lambda$; the intersection of these planes with the prism $\Delta_{k,n}^i$ are called $L_{k,n}^{j,i}$ and we denote the resulting geometric subfigures by $M_{k,n}^{j,i}$ for $j\in\{1,\ldots,k^2\}$. 
	We then define
	\begin{align*}
		u_k(x)=
		\begin{cases}
			A_k(x-\tfrac{i}{k}v) + \frac{j}{k}e_2 &\text{ if } x\in M_{k,n}^{j,i} \text{ for some } j\in\{1,\ldots,k^2\},\, i\in I,\\
			u_{\lambda,e_2}(x) &\text { otherwise},
		\end{cases}
	\end{align*}
	for $x\in Q_{e_2}$.
	
	The calculations for the occurring jumps are now simple modifications of those in Steps 1 and 2 due to \eqref{Akw} and \eqref{Ak2}.
	For instance, in the analogue case of \eqref{inside}, we can write $L_{k,n}^{j,i} = (L_k^j+\frac{i}{k}v)\times C_{k,n}^i$ with $(n-2)$-dimensional cuboids $C_{k,n}^i\subset H^\perp$. 
	The jump energy across these surfaces inside the prism $\Delta_{k,n}^{i}$ is given by
	\begin{align*}
		\sum_{j=1}^{k^2-1}\int_{L_{k,n}^{j,i}}f\big([u_k],\nu_{u_k}\big) \dd \Hcal^{n-1} = \sum_{j=1}^{k^2-1}\int_{L_k^j\times C_{k,n}^{i}} f\big(\tfrac{1}{k}e_2,\lambda\big) \dd \Hcal^{n-1}\leq \Hcal^{n-2}(C_{k,n}^{i})\frac{1}{k}f(e_2,\lambda).
	\end{align*}
	After taking the sum over all $i\in I$, the right-hand side converges to $f(e_2,\lambda)$, since the expression $\sum_{i\in I}\Hcal^{n-2}(C_{k,n}^{i})\frac{1}{k}$ converges to the $\Hcal^{n-1}$-measure of $N_{e_2}$ up to an error of $\frac{1}{k}$. The computations for the remaining surfaces, that is, the analogues of \eqref{D1} - \eqref{D3}, are even easier since the cuboids $C_{k,n}^{i}$ are bounded and the cardinality of $I$ is of order $k$.
	
	Additionally, one needs to account for the jumps across the two bases $\Delta_k^i$ and $\bar{\Delta}_k^i$ of each prism, which are described by the triangle $\Delta_k$.
	If $n=3$ and $z\in H^\perp$ with $|z|=1$, then the energy contribution of the jumps across these two triangles with normal $z$ (or $-z$) vanishes in the limit. 
	Indeed, since the lines $L_k^j$ from Step 1 have length $\frac{2j}{k^4}$ for all $j\in\{1,\ldots,k^2-1\}$ and the edge of $\Delta_k$ with normal $\lambda$ has length $\frac{2}{k^2}$, we find that
	\begin{align*}
		\int_{\Delta_k^i} f\big([u_k],\nu_{u_k}\big) \dd \Hcal^{n-1} &=
		\sum_{j=1}^{k^2} \int_{\frac{j-1}{k^3}}^{\frac{j}{k^3}} \int_0^{\frac{2j}{k^4}} f\big(\lambda - A_k(t v + sw) - \tfrac{j}{k}e_2,z\big) \dd s \dd t\\
		&=\sum_{j=1}^{k^2} \int_{\frac{j-1}{k^3}}^{\frac{j}{k^3}} \int_0^{\frac{2j}{k^4}} f\big(\lambda + k^2t e_2 - k^2s\lambda - \tfrac{j}{k}e_2,z\big) \dd s \dd t\\
		&\leq \sum_{j=1}^{k^2} \int_{\frac{j-1}{k^3}}^{\frac{j}{k^3}} \int_0^{\frac{2}{k^2}} C(1 + k^2t + k^2 s + \tfrac{j}{k})\dd s \dd t\\
		&\leq \sum_{j=1}^{k^2} \int_{\frac{j-1}{k^3}}^{\frac{j}{k^3}} \int_0^{\frac{2}{k^2}} C(3 +2k)\dd s \dd t
		= \frac{2C(3+2k)}{k^3},
	\end{align*}
	while exploiting \eqref{linear_bound}, as well as \eqref{Akw} and \eqref{Ak2}; analogously for $\bar{\Delta}_k^i$.
	The energy contribution at the combined prism bases therefore vanishes in the limit, considering that the cardinality of the index set $I$ is of order $k$.
	For $n\geq 4$, the bases of the prisms are sets of zero $\Hcal^{n-1}$-measure and thus, can be neglected when calculating the surface energy.
\end{proof}

Next, we prove the ${\rm BD}$-elliptic generalization of \cite[Lemma 6.2]{Sil17}, see also \eqref{AmB90_inequality}, by tailoring \v{S}ilhav\'y's construction to our setting.

\begin{lemma}\label{lem:silhavy_construction}
	Let $f\colon \R^n\times \Scal^{n-1}\to [0,\infty)$ be even, positively $1$-homogeneous in the first variable and ${\rm BD}$-elliptic.
	Then, for any $(\lambda,\eta)\in\R^n\times\Scal^{n-1}$ it holds that
	$$f(\lambda,\eta) \leq \sum_{i=1}^mf(\lambda_i,\eta_i)$$
	for all $(\lambda_i,\eta_i)\in\R^n\times \Scal^{n-1}$ for $i\in\{1,\ldots,m\}$ and $m\in\N$ such that $\sum_{i=1}^{m}\lambda_i\otimes \eta_i = \lambda\odot\eta$.
\end{lemma}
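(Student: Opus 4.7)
My plan is to adapt \v{S}ilhav\'y's construction from \cite[Lemma~6.2]{Sil17} to the richer setting of piecewise rigid test functions, exploiting the symmetry $\bar{f}(\lambda,\eta)=\bar{f}(\eta,\lambda)$ established in Proposition~\ref{prop:switch}. The key observation is that any symmetric decomposition $\sum_{i=1}^m\lambda_i\otimes\eta_i=\lambda\odot\eta$ differs from a \emph{full} tensor decomposition of $\lambda\otimes\eta$ exactly by the skew matrix $W\coloneqq\tfrac{1}{2}(\lambda\otimes\eta-\eta\otimes\lambda)\in\R^{n\times n}_\skw$. Since piecewise rigid functions admit a skew-symmetric gradient on each piece of their Caccioppoli partition, the class $\PR$ is naturally tailored to absorb this skew defect via rigid-motion gradients, rather than through additional jumps that would contribute extra surface energy.

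Concretely, for every $\eps>0$ I aim to build a test function $u_\eps=u_{\lambda,\eta}+\varphi_\eps$ with $\varphi_\eps\in\PR$, whose Caccioppoli partition $(P_k)_k$ carries skew gradients $A_k\in\R^{n\times n}_\skw$ with $\sum_k A_k\Lcal^n(P_k)=W$, and whose jumps essentially realise the prescribed decomposition in the sense that
\[
\int_{J_{u_\eps}}f([u_\eps],\nu_{u_\eps})\dd\Hcal^{n-1}\le\sum_{i=1}^m f(\lambda_i,\eta_i)+O(\eps).
\]
The balance condition on the $A_k$'s reflects the measure-theoretic Gau{\ss}--Green identity: the boundary trace $u_{\lambda,\eta}$ on $\partial Q_\eta$ forces $\sum_k A_k\Lcal^n(P_k)+\int_{J_{u_\eps}}[u_\eps]\otimes\nu_{u_\eps}\dd\Hcal^{n-1}=\lambda\otimes\eta$, whose symmetric part is already accounted for by $\sum_i\lambda_i\otimes\eta_i=\lambda\odot\eta$, leaving only the skew part $W$ to be supplied by the rigid-motion gradients. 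The building block is a \v{S}ilhav\'y-type slab laminate concentrating the jumps $(\lambda_i,\eta_i)$ in a thin neighbourhood of $\{x\cdot\eta=0\}$; on each piece of that laminate one superimposes the affine map $x\mapsto Wx$, together with suitable translational corrections so that every piece stays rigid and the boundary trace $u_{\lambda,\eta}$ is preserved. The symmetry $\bar{f}(\eta,\lambda)=\bar{f}(\lambda,\eta)$ is used whenever a pair $(\lambda_i,\eta_i)$ has to be rotated into a position compatible with the laminate orientation, so that amplitudes and normals can be exchanged at no cost. Once $u_\eps$ is in place, ${\rm BD}$-ellipticity gives $f(\lambda,\eta)\le\int_{J_{u_\eps}}f([u_\eps],\nu_{u_\eps})\dd\Hcal^{n-1}$, and letting $\eps\to 0$ yields the claim.

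The main obstacle lies in executing the construction in the second paragraph so that all constraints hold simultaneously: the algebraic constraint $\sum_k A_k\Lcal^n(P_k)=W$, the geometric design of the laminate encoding $\sum_i\lambda_i\otimes\eta_i$, and the rigid boundary condition $u_\eps=u_{\lambda,\eta}$ on $\partial Q_\eta$. In particular, the extra $(n-1)$-dimensional interfaces introduced when gluing the rigid motions together must contribute only $O(\eps)$ in surface energy. I expect this to require a careful choice of scales in the spirit of the nested prisms used in the proof of Proposition~\ref{prop:switch}: the diameter of the pieces and the thickness of the active neighbourhood of $\{x\cdot\eta=0\}$ must be tuned so that both the residual boundary region and the spurious gluing interfaces shrink in $\Hcal^{n-1}$-measure faster than the jump amplitudes they carry, when controlled via the linear bound \eqref{linear_bound}.
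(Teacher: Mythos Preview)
Your core strategy is exactly the one the paper uses: modify \v{S}ilhav\'y's slab laminate by superimposing a skew-symmetric gradient that absorbs the defect $W=(\lambda\otimes\eta)^{\skw}$, and then invoke ${\rm BD}$-ellipticity. However, two aspects of your plan are more complicated than necessary, and the ``main obstacle'' you flag essentially disappears once you see the paper's concrete construction.

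First, you propose varying skew matrices $A_k$ on the pieces $P_k$ subject to $\sum_k A_k\Lcal^n(P_k)=W$. The paper instead uses a \emph{single} constant skew matrix on the whole thin slab $B_k=\{0\le x\cdot\eta\le\tfrac1k\}\cap\{-\tfrac12+\tfrac1{2k}\le x\cdot\eta^\perp\le\tfrac12-\tfrac1{2k}\}$: one simply sets
\[
v_k(x)=\sum_{i=1}^m\tfrac{1}{k}\lambda_i\langle k^2x\cdot\eta_i\rangle\;-\;k(\eta\otimes\lambda)^{\skw}x
\]
on $B_k$ and $u_k=u_{\lambda,\eta}$ outside. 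Because the skew part is the same on every piece of the laminate, it introduces \emph{no} extra internal jumps; the only additional interfaces are $\partial B_k$, and there the key estimate $|k\lambda(x\cdot\eta)-v_k(x)|\le\tfrac1k\sum_i|\lambda_i|$ (from $2\sum_i\lambda_i\otimes\eta_i-(\eta\otimes\lambda)=\lambda\otimes\eta$ and $0\le t-\tfrac1n\langle nt\rangle\le\tfrac1n$) makes the boundary contributions vanish via \eqref{linear_bound}. The nested-prism gymnastics from Proposition~\ref{prop:switch} are not needed here.

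Second, the symmetry $\bar f(\lambda,\eta)=\bar f(\eta,\lambda)$ from Proposition~\ref{prop:switch} plays no role in this lemma. You never need to ``rotate'' a pair $(\lambda_i,\eta_i)$: the laminate directions $\eta_i$ are taken exactly as given, and the jumps across $L_k^i=\{x\in B_k:k^2x\cdot\eta_i\in\Zb\}$ contribute $\tfrac1k\sum_i f(\lambda_i,\eta_i)\Hcal^{n-1}(L_k^i)\to\sum_i f(\lambda_i,\eta_i)$ after using only subadditivity (Proposition~\ref{prop:subadditivity}). The symmetry is used later, in Theorem~\ref{theo:BD_symmetric_biconvex}, to obtain the reverse inequality $\Phi_f(\lambda\odot\eta)\le f(\lambda,\eta)$.
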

\begin{proof}
	Let $(\lambda,\eta)\in\R\times \Scal^{n-1}$ with
	\begin{align}\label{decomposition}
		\lambda\odot\eta =\sum_{i=1}^m\lambda_i\otimes\eta_i,\quad\text{or equivalently}\quad \lambda\otimes \eta = 2\sum_{i=1}^m \lambda_i\otimes\eta_i - \eta\otimes \lambda,
	\end{align}
	for a collection $(\lambda_i,\eta_i)\in\R^{n}\times\Scal^{n-1}$ for $i=1,\ldots,m$.
	We consider for $k\in\N$ with $k> 2$ the rectangle
	$$B_k \coloneqq \left\{x\in \R^{n}: 0\leq x\cdot \eta \leq \frac{1}{k}, -\frac{1}{2}+\frac{1}{2k}\leq  x\cdot \eta^\perp \leq \frac{1}{2}-\frac{1}{2k}\right\}$$
	and define
	$$u_k(x) \coloneqq \begin{cases}
		v_k(x) &\text{ if } x\in B_k,\\
		u_{\lambda,\eta}(x)	&\text{ if } x\notin B_k,
	\end{cases}\quad\text{with}\quad v_k(x)  \coloneqq  \sum_{i=1}^{m}\frac{1}{k}\lambda_i\langle k^2x\cdot \eta_i\rangle -k(\eta\otimes\lambda)^{\skw}x$$
	for $x\in Q_\eta$; here, $(\eta\otimes\lambda)^{\skw} \coloneqq \frac12\big(\eta\otimes\lambda-\lambda\otimes\eta\big)$, and the notation $\langle r\rangle$ stands for the integer part of $r\in\R$. In particular, it holds that, 	for any $t\in\R$ and $n\in\N$,
	\begin{align}\label{integral_part}
		0\leq t - \frac{1}{n}\langle nt\rangle \leq \frac{1}{n}.
	\end{align}
	Note that \eqref{decomposition} and \eqref{integral_part} then yield that
	\begin{align}
		|k\lambda(x\cdot\eta) - v_k(x)| &= \left|k(\lambda\otimes\eta)x - \sum_{i=1}^{m}\frac{1}{k}\lambda_i\langle k^2x\cdot \eta_i\rangle +k(\eta\otimes\lambda)^{\skw}x\right|\nonumber\\
		&=k\left|2\sum_{i=1}^m\lambda_i(x\cdot\eta_i) - (\eta\otimes\lambda)x - \sum_{i=1}^{m}\frac{1}{k^2}\lambda_i\langle k^2x\cdot \eta_i\rangle +(\eta\otimes\lambda)^{\skw}x\right|\nonumber\\
		&\leq \frac{1}{k}\sum_{i=1}^m\left|\lambda_i\right| + \left|\sum_{i=1}^m\lambda_i(x\cdot\eta_i) - (\eta\otimes\lambda)x +(\eta\otimes\lambda)^{\skw}x\right|\nonumber\\
		&= \frac{1}{k}\sum_{i=1}^m\left|\lambda_i\right| + \left|(\lambda\odot\eta)x - (\eta\otimes\lambda)x +(\eta\otimes\lambda)^{\skw}x\right| = \frac{1}{k}\sum_{i=1}^m\left|\lambda_i\right|\label{difference_estimate}
	\end{align}		
	for every $x\in Q_\eta$.
	
	The jump set $J_{u_k}$ of $u_k$ can be written as the union
	$$J_{u_k}=L_k\cup M_k \cup N_k \cup R_k \cup S_k$$
	with
	\begin{align*}
		L_k &= \bigcup_{i=1}^m L_k^i\quad\text{with}\quad L_k^i=\left\{x\in B_k: k^2x\cdot\eta_i\in\Zb\right\},\\
		M_k &= \left\{x\in \partial B_k: 0<x\cdot\eta < \frac{1}{k}\right\},\\
		N_k &= \left\{x\in Q_\eta: x\cdot\eta = 0,\, \frac{1}{2}-\frac{1}{2k}<|x\cdot \eta^\perp| < \frac{1}{2}\right\},\\
		R_k &= \left\{x\in\partial B_k: x\cdot\eta = 0\right\},\\
		S_k &= \left\{x\in\partial B_k: x\cdot\eta = \frac{1}{k}\right\}.
	\end{align*}
	We now compute the energy of the jumps at these interfaces.
	Just as in the proof of \cite[Lemma 6.2]{Sil17}, we obtain on $L_k$ that
	\begin{align}\label{jump_L_k}
		\int_{L_k}f\big([u_k],\nu_{u_k}\big) \dd\Hcal^{n-1} \leq \frac{1}{k}\sum_{i=1}^mf(\lambda_i,\eta_i)\Hcal^{n-1}(L_k^i)\to \sum_{i=1}^m f(\lambda_i,\eta_i)\quad\text{as }k\to\infty,
	\end{align}
	where we have used that $f$ is subadditive in light of Proposition \ref{prop:subadditivity}.
	On $N_K$, we compute
	\begin{align}\label{jump_N_k}
		\int_{N_k}f\big([u_k],\nu_{u_k}\big) \dd\Hcal^{n-1} = \frac{1}{k}f(\lambda,\eta)\to 0\quad\text{as }k\to\infty,
	\end{align}
	and for $x\in M_k$, we exploit \eqref{difference_estimate} to estimate
	\begin{align*}
		|\lambda-v_k(x)| &\leq |\lambda - k\lambda(x\cdot\eta)| + |k\lambda(x\cdot\eta) - v_k(x)|\\
		&\leq (1+k|(x\cdot\eta)|)|\lambda| + \frac{1}{k}\sum_{i=1}^m\left|\lambda_i\right| \leq 2|\lambda| + \frac{1}{k}\sum_{i=1}^m\left|\lambda_i\right| \leq 2|\lambda|.
	\end{align*}
	Hence, $|\lambda-v_k(x)|$ is bounded uniformly in $k$ and
	\begin{align}\label{jump_M_k}
		\int_{M_k}f\big([u_k],\nu_{u_k}\big) \dd\Hcal^{n-1} \to 0\quad\text{as } k\to\infty
	\end{align}
	since $\Hcal^{n-1}(M_k)$ vanishes in the limit.
	If $x\in R_k$, we use \eqref{difference_estimate} to obtain
	\begin{align*}
		|v_k(x)| = |v_k(x) - k\lambda(x\cdot\eta)| \leq \frac{1}{k}\sum_{i=1}^m|\lambda_i|,
	\end{align*}		
	and if $x\in S_k$, we similarly find
	\begin{align*}
		|\lambda- v_k(x)| = |k\lambda(x\cdot\eta) - v_k(x)| \leq \frac{1}{k}\sum_{i=1}^m|\lambda_i|,
	\end{align*}		
	which results in
	\begin{align}\label{jump_SR_k}
		\int_{R_k\cup S_k}f\big([u_k],\nu_{u_k}\big) \dd\Hcal^{n-1} \to 0 \quad\text{as } k\to\infty.
	\end{align}
	Combining \eqref{jump_L_k} - \eqref{jump_SR_k}, we then conclude the desired inequality since $f$ is ${\rm BD}$-elliptic and
	$u_k\in u_{\lambda,\eta} + \PR$.
\end{proof}

We are now in a position to prove our second main result of this paper.

\begin{theorem}[Characterization of $\rm \bm{BD}$-ellipticity]\label{theo:BD_symmetric_biconvex}
	Let $f\colon \R^n\times \Scal^{n-1}\to [0,\infty)$ be even and positively $1$-homogeneous in the first variable. Then, $f$ is ${\rm BD}$-elliptic if and only if $f$ is symmetric biconvex with 
	$$f(\lambda,\eta)=\Phi_f(\lambda\odot\eta)$$ 
	for every $(\lambda,\eta)\in\R^n\times\Scal^{n-1}$, where $\Phi_f$ is given as in \eqref{Phi1}.
\end{theorem}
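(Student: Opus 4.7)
The plan is to prove the two implications separately, using the symmetry identity of Proposition~\ref{prop:switch}, the symmetric analogue of \v{S}ilhav\'y's subadditivity inequality in Lemma~\ref{lem:silhavy_construction}, and the properties of $\Phi_f$ from Lemma~\ref{lem:Phi_f}.

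For the direction ${\rm BD}$-ellipticity $\Rightarrow$ symmetric biconvexity, I would fix $(\lambda,\eta)\in \R^n\times \Scal^{n-1}$ and first deduce the upper bound $f(\lambda,\eta)\leq \Phi_f(\lambda\odot\eta)$ by applying Lemma~\ref{lem:silhavy_construction} to every decomposition $\sum_{i=1}^{m}\lambda_i\otimes\eta_i=\lambda\odot\eta$ and taking the infimum. For the reverse inequality (the heart of the argument), I would exhibit, for $\lambda\neq 0$, the explicit two-term decomposition
$$\lambda\odot\eta = \tfrac{1}{2}\lambda\otimes\eta + \tfrac{|\lambda|}{2}\,\eta\otimes \tfrac{\lambda}{|\lambda|},$$
feed it into the definition of $\Phi_f$, and use the positive $1$-homogeneity of $f$ in its first variable together with the swap identity $\bar f(\eta,\lambda)=\bar f(\lambda,\eta)$ from Proposition~\ref{prop:switch} to obtain
$$\Phi_f(\lambda\odot\eta)\leq \tfrac{1}{2}f(\lambda,\eta)+\tfrac{|\lambda|}{2}f\bigl(\eta,\tfrac{\lambda}{|\lambda|}\bigr)=\tfrac{1}{2}f(\lambda,\eta)+\tfrac{1}{2}\bar f(\eta,\lambda)=f(\lambda,\eta).$$
The case $\lambda=0$ is trivial by positive $1$-homogeneity. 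Since $\Phi_f$ is convex and positively $1$-homogeneous on $\R^{n\times n}$ by Lemma~\ref{lem:Phi_f}, its restriction $\Psi\coloneqq \Phi_f|_{\R^{n\times n}_{\sym}}$ inherits both properties and witnesses the symmetric biconvexity of $f$ with $f(\lambda,\eta)=\Psi(\lambda\odot\eta)$.

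For the converse direction, I would take $\varphi\in \PR$, set $u\coloneqq u_{\lambda,\eta}+\varphi$, and first establish the identity
$$\int_{J_u}[u]\odot\nu_u\dd\Hcal^{n-1}=\lambda\odot\eta.$$
This is a consequence of the measure-theoretic Gauss--Green formula applied to the symmetric distributional gradient on $Q_\eta$: the absolutely continuous part $\sym(\nabla u)=\sym(\nabla \varphi)$ vanishes almost everywhere because piecewise rigid maps have skew-symmetric gradients on each Caccioppoli cell, while the trace of $u$ on $\partial Q_\eta$ coincides with that of $u_{\lambda,\eta}$ since $\varphi=0$ there, which yields the claimed total symmetric measure $\lambda\odot\eta$. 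Jensen's inequality applied to the convex, positively $1$-homogeneous $\Psi$ then delivers
$$f(\lambda,\eta)=\Psi(\lambda\odot\eta)\leq \int_{J_u}\Psi([u]\odot\nu_u)\dd\Hcal^{n-1}=\int_{J_u}f([u],\nu_u)\dd\Hcal^{n-1},$$
which is precisely ${\rm BD}$-ellipticity.

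The principal obstacle will be the lower bound $\Phi_f(\lambda\odot\eta)\leq f(\lambda,\eta)$: this is the single step where the swap identity of Proposition~\ref{prop:switch} (and hence the richer class $\PR$ of test functions) genuinely enters, and spotting the right two-term decomposition so that positive $1$-homogeneity and the swap identity collapse the bound is the decisive insight tying the proof together. The remaining steps are then routine consequences of Lemma~\ref{lem:silhavy_construction}, Jensen's inequality, and standard ${\rm BD}$ trace theory.
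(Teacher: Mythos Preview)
Your proposal is correct. The forward direction (${\rm BD}$-ellipticity $\Rightarrow$ symmetric biconvexity) matches the paper almost verbatim: both combine Lemma~\ref{lem:silhavy_construction} for the bound $f\leq \Phi_f(\cdot\odot\cdot)$ with the explicit two-term decomposition $\lambda\odot\eta=\tfrac12\lambda\otimes\eta+\tfrac12\eta\otimes\lambda$ and Proposition~\ref{prop:switch} for the reverse bound.

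For the converse direction you take a genuinely different route. The paper argues via the dual representation $\Psi(F)=\sup_{i}A_i:F$ with symmetric matrices $A_i$, applies the chain rule to $g_i\circ u$ with $g_i(x)=A_ix$, and uses that $A_i:(\nabla u)^T=0$ because $A_i$ is symmetric while $\nabla u$ is skew-symmetric; this is a streamlined version of the symmetric-joint-convexity argument from~\cite{FPS21}. You instead compute the total symmetric jump measure $\int_{J_u}[u]\odot\nu_u\,\dd\Hcal^{n-1}=\lambda\odot\eta$ directly from the Gauss--Green formula and the fact that $\sym(\nabla\varphi)=0$ for $\varphi\in\PR$, then invoke Jensen together with the positive $1$-homogeneity of $\Psi$. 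Your approach is the natural ${\rm BD}$-analogue of the paper's own proof of ``(4)$\Rightarrow$(1)'' in Theorem~\ref{theo:BV_elliptic} and is arguably more direct for the statement at hand; the paper's approach has the side benefit of simultaneously establishing the equivalence with symmetric joint convexity and of avoiding the (minor) case distinction $\Hcal^{n-1}(J_u)>0$ implicit in the Jensen step.
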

\begin{proof}
	\textit{Step 1: ${\rm BD}$-ellipticity implies symmetric biconvexity.}
	If $f$ is ${\rm BD}$-elliptic, then it holds that $\bar{f}(\lambda,\eta)=\bar{f}(\eta,\lambda)$ (cf.~\eqref{1hom_extension}) for every $(\lambda,\eta)\in\R^n\times\Scal^{n-1}$ due to Lemma \ref{prop:switch}.
	In particular, we find that
	\begin{align}\label{f_Phi_estimate}
		\Phi_f(\lambda\odot\eta) \leq \bar{f}(\tfrac{1}{2}\lambda,\eta) + \bar{f}(\tfrac{1}{2}\eta,\lambda) = f(\lambda,\eta)
	\end{align}
	for every $(\lambda,\eta)\in\R^n\times\Scal^{n-1}$ since $\bar{f}$ is positively $1$-homogeneous in both variables.
	In view of Lemma \ref{lem:silhavy_construction}, we conclude that the two sides of \eqref{f_Phi_estimate} coincide, which proves that
	$f$ is symmetric biconvex.
	\medskip
	
	\textit{Step 2: Symmetric biconvexity implies ${\rm BD}$-ellipticity.}
	The proof is essentially a reformulation and simplification of some results in \cite{FPS21}. 
	For the reader's convenience, we present the details below.
	If $f$ is symmetric biconvex, then there exists a positively $1$-homogeneous, convex function $\Psi\colon \R^{n\times n}_{\sym}\to[0,\infty)$
	such that $f(\lambda,\eta)=\Psi(\lambda\odot\eta)$ for every $(\lambda,\eta)\in\R^n\times\Scal^{n-1}$. 
	
	Following the proof of \cite[Proposition 4.9]{FPS21}, which is based on \cite[Proposition 2.31]{AFP00}, we find a sequence of symmetric matrices $(A_i)_i\subset\R^{n\times n}_{\sym}$ such that $\Psi(F)=\sup_{i\in\N} A_i :F$ for every $F\in\R^{n\times n}_{\sym}.$
	In particular, it holds that
	\begin{align}\label{f_A_i}
		f(\lambda,\eta) = \sup_{i\in\N} (A_i\lambda)\cdot\eta\quad\text{for every } (\lambda,\eta)\in\R^n\times\Scal^{n-1}.
	\end{align}
	Finally, we define the auxiliary functions $g_i(x) \coloneqq A_ix$ for every $x\in\R^{n}$ and continue with the strategy in \cite[Theorem 3.4]{FPS21}.
	We fix $i\in\N$, $(\lambda,\eta)\in\R^n\times\Scal^{n-1}$ with $\lambda\neq 0$ (otherwise there is nothing to show), and select any $u\in u_{\lambda,\eta} + \PR$. Since $u\in {\rm SBV}(Q_\eta;\R^n)$ and $g_i\in C^1(\R^n;\R^n)$, we may apply the chain rule \cite[Theorem 3.96]{AFP00} to differentiate the composition $g_i\circ u\in {\rm BV}(Q_\eta;\R^n)$, obtaining
	\begin{align*}
		D(g_i\circ u) &= \nabla g_i(u)\nabla u\Lcal^n + \big(g_i(u^+) - g_i(u^-)\big)\otimes\nu_u\Hcal^{n-1}\mres J_u\\
			&=A_i\nabla u\Lcal^n + (A_i[u])\otimes\nu_u\Hcal^{n-1}\mres J_u\,,
	\end{align*}
	where $D(g_i\circ u)$ is the distributional derivative. 
	By evaluating in $Q_\eta$ and taking the trace, we then find
	\begin{align*}
		\Tr\big(D(g_i\circ u)(Q_\eta)\big) = \int_{Q_\eta}A_i : (\nabla u)^T\dd \Lcal^{n} + \int_{J_u}(A_i[u])\cdot \nu_u\dd \Hcal^{n-1}.
	\end{align*}
	Since $A_i$ is symmetric and $(\nabla u)^T$ is skew-symmetric, their scalar product vanishes.
	Moreover, as $g_i\circ u - g_i \circ u_{\lambda,\eta}$ has compact support in $Q_\eta$ it holds that $D(g_i\circ u)(Q_\eta) = D(g_i\circ u_{\lambda,\eta})(Q_\eta)$, which leads to
	\begin{align*}
		\int_{J_u}(A_i[u])\cdot\nu_u \dd \Hcal^{n-1} = (A_i\lambda)\cdot\eta.
	\end{align*}
	Finally, we conclude from \eqref{f_A_i} that
	\begin{align*}
		\int_{J_u}f([u],\nu_u)\dd \Hcal^{n-1} &= \int_{J_u}\sup_{i\in\N}(A_i[u])\cdot\eta \dd \Hcal^{n-1}\\
		&\geq \sup_{i\in\N}\int_{J_u}(A_i[u])\cdot\eta \dd \Hcal^{n-1} =\sup_{i\in\N} (A_i\lambda)\cdot\eta = f(\lambda,\eta),
	\end{align*}	 
	which proves that $f$ is ${\rm BD}$-elliptic.
\end{proof}

Now that we have established the equivalence of ${\rm BD}$-ellipticity and symmetric biconvexity under suitable assumptions, we turn to a relaxation result similar to Proposition \ref{prop:BV_envelope}.
For an even function $f\colon \R^n\times\Scal^{n-1}\to[0,\infty)$ we define the two ${\rm BD}$-elliptic envelopes
\begin{align*}
	f^\BD(\lambda,\eta)& \coloneqq \sup\left\{h(\lambda,\eta) : h\text{ is ${\rm BD}$-elliptic and } h\leq f\right\},\\
	f_\BD(\lambda,\eta)& \coloneqq \inf\left\{\int_{J_u} f([u],\nu_u)\dd\Hcal^{n-1}: u \in u_{\lambda,\eta} + \PR\right\}.
\end{align*}
for every $(\lambda,\eta)\in\R^n\times\Scal^{n-1}$.

\begin{proposition}[$\rm \bm{BD}$-elliptic envelope]\label{prop:BD_envelope}
	Let $f\colon \R^n\times \Scal^{n-1}\to [0,\infty)$ be even and positively $1$-homogeneous in the first variable.
	Then, it holds that
	$$f^\BD(\lambda,\eta) = f_\BD(\lambda,\eta) = \Phi_f(\lambda\odot\eta),$$
	for every $(\lambda,\eta)\in\R^n\times\Scal^{n-1}$.
	In particular, the ${\rm BD}$-elliptic envelope of $f$ is ${\rm BD}$-elliptic, and $f$ is ${\rm BD}$-elliptic if and only if it coincides with its ${\rm BD}$-elliptic envelope.
\end{proposition}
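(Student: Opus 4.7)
The plan is to follow the proof of Proposition \ref{prop:BV_envelope} with $\odot$ in place of $\otimes$ and Theorem \ref{theo:BD_symmetric_biconvex} in place of Theorem \ref{theo:BV_elliptic}, establishing the chain
\[
\Phi_f(\lambda\odot\eta) \leq f^\BD(\lambda,\eta) \leq f_\BD(\lambda,\eta) \leq \Phi_f(\lambda\odot\eta).
\]

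The middle inequality is immediate from the definitions: for any BD-elliptic $h \leq f$ and any $u \in u_{\lambda,\eta}+\PR$, BD-ellipticity of $h$ gives $h(\lambda,\eta) \leq \int_{J_u} h([u],\nu_u)\,\dd\Hcal^{n-1} \leq \int_{J_u} f([u],\nu_u)\,\dd\Hcal^{n-1}$; taking the infimum over $u$ and then the supremum over $h$ closes this bound. The rightmost inequality I would get by recycling the construction in the proof of Lemma \ref{lem:silhavy_construction}: given any decomposition $\sum_{i=1}^m \lambda_i\otimes\eta_i = \lambda\odot\eta$ with $(\lambda_i,\eta_i)\in \R^n\times\Scal^{n-1}$, the sequence $u_k\in u_{\lambda,\eta}+\PR$ built there has jump energy converging to $\sum_i f(\lambda_i,\eta_i)$. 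Crucially, only the final step of that lemma uses BD-ellipticity of $f$; the construction itself does not. Passing to the infima over test functions and then over admissible decompositions produces $f_\BD(\lambda,\eta) \leq \Phi_f(\lambda\odot\eta)$.

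For the leftmost inequality I would exhibit $\tilde f(\lambda,\eta) := \Phi_f(\lambda\odot\eta)$ as a valid competitor in the supremum defining $f^\BD$. BD-ellipticity of $\tilde f$ follows from Theorem \ref{theo:BD_symmetric_biconvex}: by Lemma \ref{lem:Phi_f}, $\Phi_f$ is convex and positively $1$-homogeneous, so its restriction to $\R^{n\times n}_{\sym}$ plays the role of $\Psi$ in Definition \ref{def:symmetric_biconvex} and makes $\tilde f$ symmetric biconvex. The hard part will be the second half of this step, namely $\tilde f \leq f$. In the BV setting the analogous inequality is trivial because $\Phi_f(\lambda\otimes\eta) \leq f(\lambda,\eta)$ is witnessed by the one-term decomposition; here $\lambda\odot\eta$ is generically a rank-two symmetric matrix, and that shortcut is unavailable. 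I expect to close this gap by combining the evenness and positive $1$-homogeneity of $f$ with the spectral decomposition of $\lambda\odot\eta$ and the interchange symmetry from Proposition \ref{prop:switch} applied at the level of $\bar f$, so that the two asymmetric halves of the natural decomposition $\lambda\odot\eta=\tfrac12\lambda\otimes\eta+\tfrac12\eta\otimes\lambda$ can each be bounded by $\tfrac12 f(\lambda,\eta)$.

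Once the chain closes, the \emph{in particular} part of the statement is immediate: the BD-elliptic envelope $f^\BD$ agrees with the BD-elliptic function $\tilde f$, and $f$ is BD-elliptic precisely when $f=\tilde f$, which by Theorem \ref{theo:BD_symmetric_biconvex} is equivalent to $f$ being symmetric biconvex.
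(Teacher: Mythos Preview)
Your overall scheme coincides with the paper's: the paper's proof is literally ``argue as in Proposition~\ref{prop:BV_envelope}, replacing Theorem~\ref{theo:BV_elliptic} by Theorem~\ref{theo:BD_symmetric_biconvex} and \cite[Lemma~6.2]{Sil17} by Lemma~\ref{lem:silhavy_construction}'', and you have unpacked exactly that. You are also right to flag the step $\tilde f(\lambda,\eta)=\Phi_f(\lambda\odot\eta)\leq f(\lambda,\eta)$ as the one place where the analogy with the ${\rm BV}$ case is not automatic; the paper's terse proof passes over this silently.

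However, your proposed remedy does not work, and the difficulty is genuine. Proposition~\ref{prop:switch} has ${\rm BD}$-ellipticity of $f$ as a standing hypothesis, so you cannot invoke the interchange identity $\bar f(\lambda,\eta)=\bar f(\eta,\lambda)$ for the arbitrary density $f$ appearing here. Worse, the inequality you are trying to establish is false under the stated assumptions. In $\R^2$ take $f(\lambda,\eta)=|\lambda\cdot e_1|$; this is even, positively $1$-homogeneous (indeed subadditive and continuous) in the first variable. Then $f(e_2,e_1)=0$, while any decomposition $\sum_i\lambda_i\otimes\eta_i=e_2\odot e_1$ must satisfy $\sum_i(\lambda_i\cdot e_1)(\eta_i\cdot e_2)=\tfrac12$ from the $(1,2)$-entry, so that $\sum_i|\lambda_i\cdot e_1|\geq\tfrac12$ and hence $\Phi_f(e_2\odot e_1)\geq\tfrac12>0=f(e_2,e_1)$. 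Since $f^{\BD}\leq f$ always, this rules out the asserted identity $f^{\BD}(\lambda,\eta)=\Phi_f(\lambda\odot\eta)$ for this $f$; the paper's proof shares this gap. A formula that survives the example is $\inf\{\Phi_f(\lambda\odot\eta+W):W\in\R^{n\times n}_{\skw}\}$: the choice $W=(\lambda\otimes\eta)^{\skw}$ gives the missing bound $\Phi_f(\lambda\otimes\eta)\leq f(\lambda,\eta)$, the resulting $\Psi(G)=\inf_{W}\Phi_f(G+W)$ is still convex and positively $1$-homogeneous on $\R^{n\times n}_{\sym}$ (so Theorem~\ref{theo:BD_symmetric_biconvex} applies), and the extra skew part $W$ can be absorbed into the affine piece of the test functions in Lemma~\ref{lem:silhavy_construction} without affecting the jump-energy estimates.
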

\begin{proof}
	The proof can be handled analogously to Proposition \ref{prop:BV_envelope}.
	We merely replace Theorem \ref{theo:BV_elliptic} with Theorem \ref{theo:BD_symmetric_biconvex}, and \cite[Lemma 6.2]{Sil17} with Lemma \ref{lem:silhavy_construction}.
\end{proof}

Similar to Remark \ref{rem:joint_convexity}, we tackle one final related convexity notion.
\begin{remark}[Symmetric joint convexity]
	We say that an even function $f	\colon \R^n\times\Scal^{n-1}\to[0,\infty)$ is symmetric jointly convex if
	\begin{align*}
		f(\lambda,\eta) = \sup_{i\in\N}\big(g_i(\lambda) - g_i(0)\big)\cdot \eta\quad\text{ for every } (\lambda,\eta)\in\R^n\times\Scal^{n-1},
	\end{align*}
	where $g_i\in\C^1( \R^n;\R^n)$ is Lipschitz continuous and conservative for every $n\in\N$.
	
	Here, we merged \cite[Definition 3.1]{FPS21} with the class of functions for $g_i$ used in \cite[Remark 3.2]{FPS21}.
	In our setting the chain rule for compositions of $g_i$ with ${\rm BV}$-functions can be applied directly.
	Moreover, if $f$ is positively $1$-homogeneous in the first variable, then symmetric joint convexity is also equivalent to ${\rm BD}$-ellipticity.

	The proofs of both implications can be handled almost exactly as in Step 2 of the proof of Theorem \ref{theo:BD_symmetric_biconvex}, which are inspired by \cite[Theorem 3.4]{FPS21} and \cite[Proposition 4.9]{FPS21} but do not require boundedness of the functions $g_i$.	
\end{remark}

We close this article with a curious example for a symmetric biconvex function. It has already been establishes in \cite[Example 4.16]{FPS21} that densities of the form
$$f(\lambda,\eta) = \psi(\lambda),\quad (\lambda,\eta)\in\R^{n}\times\Scal^{n-1}$$
with an anisotropic function $\psi$ are, in general, not ${\rm BD}$-elliptic. In the following, we tackle the case $\psi=|\cdot|$, which has been addressed in \cite[Theorem 4.1]{FPS21}
\begin{example}\label{ex:BD}
	We consider the function
	$$f\colon \R^{2}\times\Scal^{1}\to[0,\infty),\ (\lambda,\eta)\mapsto |\lambda\otimes\eta|,$$
	where $|.|$ is the Frobenius norm.
	It is then obvious that $f$ is biconvex, and also ${\rm BV}$-elliptic due to Theorem \ref{theo:BV_elliptic}, since $f(\lambda,\eta)=\Phi(\lambda\otimes\eta)$ for every $(\lambda,\eta)\in\R^2\times\Scal^{1}$ with
	$$\Phi \colon \R^{2\times 2}\to [0,\infty),\ F\mapsto |F|.$$
	It might be surprising to see that $f$ is also symmetric biconvex, and thus ${\rm BD}$-elliptic in view of Theorem \ref{theo:BD_symmetric_biconvex}, since $f(\lambda,\eta)=\Psi(\lambda\odot\eta)$ for every $(\lambda,\eta)\in\R^2\times\Scal^{1}$ with
	$$\Psi \colon \R^{2\times 2}_{\sym}\to [0,\infty),\ F\mapsto \sqrt{(F_{11} - F_{22})^2 + (F_{12}+F_{21})^2} = \sqrt{|F|^2 - 2\det F}.$$
	However, neither $\Phi$ nor $\Psi$ (extended canonically to all of $\R^{2\times 2}$) coincides with $\Phi_f$ (cf.~\eqref{Phi1}) on all of $\R^{2\times 2}$.
	
	It turns out that $\Phi_f$ is the nuclear norm $|\cdot|_\ast$ on $\R^{2\times 2}$, i.e.,
	$$\Phi_f(F)= |F|_\ast \coloneqq  \Tr\big(\sqrt{F^TF}\big) = \sigma_1(F) + \sigma_2(F)$$
	for every $F\in\R^{2\times 2}$, where $\sigma_1(F),\sigma_2(F)\geq 0$ are the two singular values of $F$.
	To prove this identity, we turn our attention to Lemma \ref{lem:Phi_f}, in which we established that $\Phi_f$ is the convex envelope of \eqref{phi_extension}. 
	Since the nuclear norm is convex and coincides with $f$ on tensor products, we obtain the trivial inequality $|\cdot|_\ast \leq \Phi_f$.
	We establish the reverse inequality by exploiting the singular value decomposition: for every $F\in\R^{2\times 2}$ there exist orthogonal matrices $U,V\in {\rm O}(2)$ such that
	$$F=U\begin{pmatrix}\sigma_1(F) & 0 \\ 0 & \sigma_2(F)\end{pmatrix}V^T;$$
	in particular, it holds that
	$$F = \sigma(F)(Ue_1)\otimes(Ve_1) + \sigma_2(F)(Ue_2)\otimes(Ve_2).$$
	This composition then yields that $\Phi_f\leq |\cdot|_\ast$ since the columns of $U$ and $V$ are normalized and the singular values are non-negative.
\end{example}

\subsection*{Acknowledgments}
The authors appreciate the insightful discussions with Manuel Friedrich about ${\rm BD}$-ellipticty and symmetric joint convexity, and would like to thank Hidde Sch\"{o}nberger for his valuable suggestions for Example \ref{ex:BD}.
This work was initiated when CK and DE were affiliated with Utrecht University.
MM is a member of the \emph{Gruppo Nazionale per l'Analisi Matematica, la Probabilit\`{a} e le loro Applicazioni} (GNAMPA) of the Italian \emph{Istituto Nazionale di Alta Matematica} (INdAM) and of the \emph{Integrated Additive Manufacturing} research group at Politecnico di Torino. He acknowledges support from the MIUR grant Dipartimenti di Eccellenza 2018-2022 (E11G18000350001). 
This study was carried out within the \emph{Geometric-Analytic Methods for PDEs and Applications} project (2022SLTHCE), funded by European Union -- Next Generation EU within the PRIN 2022 program (D.D.~104 - 02/02/2022 Ministero dell’Università e della Ricerca). This manuscript reflects only the authors’ views and opinions and the Ministry cannot be considered responsible for them.

Part of this research was developed at CIRM Luminy, France, during the workshop \emph{Beyond Elasticity: Advances and Research Challenges}, and at ESI Vienna, Austria, during the workshop \emph{Between Regularity and Defects: Variational and Geometrical Methods in Materials Science}. The hospitality of both centers is gratefully acknowledged.

\bibliographystyle{abbrv}
\bibliography{SurfaceRelaxation}

\begin{thebibliography}{10}

\bibitem{Alm76}
F.~J. Almgren, Jr.
\newblock Existence and regularity almost everywhere of solutions to elliptic
  variational problems with constraints.
\newblock {\em Mem. Amer. Math. Soc.}, 4(165):viii+199, 1976.

\bibitem{AmB90i}
L.~Ambrosio and A.~Braides.
\newblock Functionals defined on partitions in sets of finite perimeter. {I}.
  {I}ntegral representation and {$\Gamma$}-convergence.
\newblock {\em J. Math. Pures Appl. (9)}, 69(3):285--305, 1990.

\bibitem{AmB90ii}
L.~Ambrosio and A.~Braides.
\newblock Functionals defined on partitions in sets of finite perimeter. {II}.
  {S}emicontinuity, relaxation and homogenization.
\newblock {\em J. Math. Pures Appl. (9)}, 69(3):307--333, 1990.

\bibitem{AmB95}
L.~Ambrosio and A.~Braides.
\newblock Energies in {SBV} and variational models in fracture mechanics.
\newblock In {\em Homogenization and applications to material sciences ({N}ice,
  1995)}, volume~9 of {\em GAKUTO Internat. Ser. Math. Sci. Appl.}, pages
  1--22. Gakk{\={o}}tosho, Tokyo, 1995.

\bibitem{AFP00}
L.~Ambrosio, N.~Fusco, and D.~Pallara.
\newblock {\em Functions of bounded variation and free discontinuity problems}.
\newblock Oxford Mathematical Monographs. The Clarendon Press, Oxford
  University Press, New York, 2000.

\bibitem{BFM98}
G.~Bouchitt\'{e}, I.~Fonseca, and L.~Mascarenhas.
\newblock A global method for relaxation.
\newblock {\em Arch. Rational Mech. Anal.}, 145(1):51--98, 1998.

\bibitem{Car08}
D.~G. Caraballo.
\newblock Crystals and polycrystals in {$\Bbb R^n$}: lower semicontinuity and
  existence.
\newblock {\em J. Geom. Anal.}, 18(1):68--88, 2008.

\bibitem{Car13}
D.~G. Caraballo.
\newblock B{V}-ellipticity and lower semicontinuity of surface energy of
  {C}accioppoli partitions of {${\Bbb R}^n$}.
\newblock {\em J. Geom. Anal.}, 23(1):202--220, 2013.

\bibitem{Car17}
D.~G. Caraballo.
\newblock Existence of surface energy minimizing partitions of {$\Bbb{R}^n$}
  satisfying volume constraints.
\newblock {\em Trans. Amer. Math. Soc.}, 369(3):1517--1546, 2017.

\bibitem{ChF97}
R.~Choksi and I.~Fonseca.
\newblock Bulk and interfacial energy densities for structured deformations of
  continua.
\newblock {\em Arch. Rational Mech. Anal.}, 138(1):37--103, 1997.

\bibitem{Dac08}
B.~Dacorogna.
\newblock {\em Direct methods in the calculus of variations}, volume~78 of {\em
  Applied Mathematical Sciences}.
\newblock Springer, New York, second edition, 2008.

\bibitem{DPO1993}
G.~Del~Piero and D.~R. Owen.
\newblock Structured deformations of continua.
\newblock {\em Arch. Rational Mech. Anal.}, 124(2):99--155, 1993.

\bibitem{EvG92}
L.~C. Evans and R.~F. Gariepy.
\newblock {\em Measure theory and fine properties of functions}.
\newblock Studies in Advanced Mathematics. CRC Press, Boca Raton, FL, 1992.

\bibitem{FPS21}
M.~Friedrich, M.~Perugini, and F.~Solombrino.
\newblock Lower semicontinuity for functionals defined on piecewise rigid
  functions and on {$GSBD$}.
\newblock {\em J. Funct. Anal.}, 280(7):Paper No. 108929, 45, 2021.

\bibitem{FrS20}
M.~Friedrich and F.~Solombrino.
\newblock Functionals defined on piecewise rigid functions: integral
  representation and {$\Gamma$}-convergence.
\newblock {\em Arch. Ration. Mech. Anal.}, 236(3):1325--1387, 2020.

\bibitem{KrR16}
J.~Kristensen and F.~Rindler.
\newblock Piecewise affine approximations for functions of bounded variation.
\newblock {\em Numer. Math.}, 132(2):329--346, 2016.

\bibitem{MMO2023}
J.~Matias, M.~Morandotti, and D.~R. Owen.
\newblock {\em Energetic Relaxation to Structured Deformations. A Multiscale
  Geometrical Basis for Variational Problems in Continuum Mechanics}.
\newblock SpringerBriefs on PDEs and Data Science. Springer, Singapore, 2023.

\bibitem{Sil17}
M.~Silhavy.
\newblock The general form of the relaxation of a purely interfacial energy for
  structured deformations.
\newblock {\em Mathematics and Mechanics of Complex Systems}, 5:191--215, 2017.

\end{thebibliography}

\end{document}